\def\di{\displaystyle}
\def\K{\mathbb{K}}
\def\R{\mathbb{R}}
\newtheorem{definition}{Definition}
\newtheorem{lemma}{Lemma}
\newtheorem{theorem}{Theorem}
\newtheorem{remark}{\textbf{Remark}}
\begin{document}
\title[Construction of fractional derivatives]{Comments on various extensions of the Riemann-Liouville fractional derivatives : about the Leibniz and chain rule properties }

\author{Jacky Cresson and Anna Szafra\'{n}ska}
\date{}
\maketitle

\begin{abstract}
Starting from the Riemann-Liouville derivative, many authors have built their own notion of fractional derivative in order to avoid some classical difficulties like a non zero derivative for a constant function or a rather complicated analogue of the Leibniz relation. Discussing in full generality the existence of such operator over continuous functions, we derive some obstruction Lemma which can be used to prove the triviality of some operators as long as the linearity and the Leibniz property are preserved. As an application, we discuss some properties of the Jumarie's fractional derivative as well as the local fractional derivative. We also discuss the chain rule property in the same perspective.
\end{abstract}

\noindent

%\textbf{Key words}: fractional derivations, Leibniz property, chain rule, obstruction Lemma, Jumarie fractional derivative, local fractional derivative\\

%\textbf{AMS subject classification:}

\tableofcontents

\section{Introduction}

Recently, many authors have tried to define new operators acting on continuous functions starting from the well known Riemann-Liouville derivatives. The reason for such generalizations is usually the fact that the Riemann-Liouville fractional derivatives do not respect some interesting properties. As an example, the Riemann-Liouville derivatives of a constant function is non zero or they do not satisfy the Leibniz's relation. The Jumarie's derivative introduced by G. Jumarie in 2006 (see \cite{j1}) and in 2009 (see \cite{j2}) and the local fractional derivatives introduced by Kolwankar and Gangal in 1996 (see \cite{kg}) are examples of such a strategy. The question is then : imposing some specific algebraic constraints on a given operator acting on continuous functions or a suitable subset of continuous functions, what type of operators can we construct ? \\

A partial answer was given by V.E. Tarasov in two articles dealing with operators satisfying the Leibniz or chain rule formula. Starting with a classical rigidity results for derivations acting on $C^2$ functions, he deduces that derivations over continuous functions can not satisfy the Leibniz or chain rule. The incompleteness of this argument was pointed out by X. Wang in \cite{wang} arguing that for some local version of the fractional derivative the Leibniz formula can hold and moreover that the aim of fractional derivative is precisely to deal with non-differentiable functions. However, some counter examples to the fact that the Jumarie fractional derivatives satisfies the Leibniz property were given in \cite{liu,tarasov1,tarasov2}. The same questioning concerning the chain rule property was also discussed.\\

In the following, we formulate precisely the extension problem and we prove in particular that any linear operator acting on the set of continuous functions and satisfying the Leibniz property is trivial (the obstruction Lemma). This result seems not to  be known by many authors working on fractional calculus so that we provide a proof in this article. This result means that in order to generalize the classical derivative of continuous functions, one has to cancel one of the two previous relations. If not, one obtains an operator which gives zero for all functions. This result gives a complete proof of Tarasov's idea that the Leibniz property is too strong to be preserved when extending the derivative to continuous functions. We give precise statements in the following.\\

We then discuss the construction of alternative fractional derivatives proposed by G. Jumarie and by Kolwankar and Gangal respectively. These two operators are supposed to satisfy the linearity and the Leibniz's property and to be defined on continuous functions. However, in view of the obstruction Lemma, this is not possible. We then look more precisely on the construction of these operators and their properties. We prove in particular that the Jumarie's fractional derivative can not satisfy the Leibniz's property. This result has strong consequences as it invalidates many uses of this operator as for example to extend the classical calculus of variations \cite{almeida,malino}.

We also discuss the Kolwankar-Gangal notion of local fractional derivatives. Here again, using the obstruction Lemma, one can prove that this operator must be trivial on a huge subset of continuous functions. This result was already proved using different arguments in \cite{bc2}.\\

We end our discussion with some way to bypass the obstruction Lemma. As we will see, even from the algebraic point of view, the Riemann-Liouville fractional derivatives can be seen as a universal solution to an extension problem over suitable analogue of continuous functions.

\section{Rigidity property and the Leibniz property}

In this Section, we consider the set $C^0 ([a,b])$ of real valued continuous functions defined on the interval $[a,b]$ and the set $C^1 ([a,b])$ of continuously differentiable real valued functions defined on $[a,b]$.\\

The classical derivative of a real valued function $f\in C^1 ([a,b])$ is usually denoted by $f'$ and defined for all $t\in ]a,b[$ as
\begin{equation}
\label{defderi}
f' (t)=\lim_{h\rightarrow 0} \di\frac{f(t+h)-f(t)}{h} .
\end{equation}
The classical derivative satisfies two fundamental properties:

\begin{itemize}
\item It is a {\it linear} operator: for all $(\lambda,\mu)\in \R^2$ and $(f,g)\in C^1 ([a,b])$, we have $(\lambda f +\mu g)' =\lambda f +\mu g $.

\item The {\it Leibniz} property : for all $(f,g)\in C^1 ([a,b])$, we have
\begin{equation}
(f\cdot g )' =f' \cdot g +f\cdot g' .
\end{equation}
\end{itemize}

These two properties have leaded to the algebraic notion of {\it derivations} which appears in many context.

\begin{definition}[Derivations]
Let $A$ be a ring over an algebraic closed field $\K$. A derivation on $A$ is a mapping $D : A\rightarrow A$ such that $D$ is $\K$ linear and satisfies the Leibniz property: for all $(x,y)\in A$, we have $D(x \cdot y ) = Dx \cdot y +x \cdot Dy$.
\end{definition}

We refer to (\cite{jacob},Chap.I,$\S$.2,p.5) for more details.\\

We denote by $D_{NL}$ the Newton-Leibniz derivation defined over $C^1 ([a,b])$ and given for all $x\in C^1 ([a,b])$ by $D_{NL} (x)=x'$. \\

A classical results is the following {\it rigidity} result:

\begin{lemma}[Rigidity]
\label{rigidity}
Any derivation $D$ over $C^2 ([a,b])$ is of the form
\begin{equation}
D=a.D_{NL} ,
\end{equation}
where $a$ is an arbitrary $C^1$ function.
\end{lemma}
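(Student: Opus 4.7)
The plan is to reduce the global identity $D = a \cdot D_{NL}$ to a pointwise identity $D(f)(t_0) = a(t_0) f'(t_0)$ at each $t_0 \in [a,b]$, where $a(t_0) := D(\mathrm{id})(t_0)$ and $\mathrm{id}(s) = s$. The Leibniz property evaluated at $t_0$ turns into the point-derivation identity $L_{t_0}(fg) = L_{t_0}(f)\,g(t_0) + f(t_0)\,L_{t_0}(g)$ for $L_{t_0}(f) := D(f)(t_0)$, and this is the relation I will exploit throughout.

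The first steps are purely algebraic. Applying the Leibniz rule to $1 = 1 \cdot 1$ yields $D(1) = 2D(1)$, hence $D(1) = 0$; by $\K$-linearity, $D(c) = 0$ for every constant function. I would then set $a := D(\mathrm{id})$, which belongs to $C^1([a,b])$ since $\mathrm{id} \in C^2$ and the target of $D$ is (at least) continuously differentiable.

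The heart of the argument is showing that $L_{t_0}$ depends on $f$ only through $f'(t_0)$. I would use the first-order Taylor decomposition
\begin{equation*}
f(s) = f(t_0) + (s - t_0) f'(t_0) + R(s), \qquad R(t_0) = R'(t_0) = 0,
\end{equation*}
so that by linearity and the previous step $L_{t_0}(f) = a(t_0) f'(t_0) + L_{t_0}(R)$. To kill the remainder I would factor $R(s) = (s - t_0)\,\tilde{R}(s)$ with $\tilde{R}(s) = \int_0^1 R'(t_0 + \tau(s-t_0))\,d\tau$, notice that $\tilde{R}(t_0) = R'(t_0) = 0$, and apply the Leibniz rule to this product. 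Both contributions in $D((s-t_0)\tilde{R})(t_0) = a(t_0)\,\tilde{R}(t_0) + (t_0 - t_0)\,D(\tilde{R})(t_0)$ vanish, so $L_{t_0}(R) = 0$, and the desired formula $D(f)(t_0) = a(t_0) f'(t_0)$ follows for every $t_0$.

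The step I expect to be the main obstacle is the regularity book-keeping in the factorization $(s-t_0)\tilde{R}$. For $f\in C^2$ the auxiliary function $\tilde{R}$ lies only in $C^1$, so strictly speaking invoking Leibniz on the pair $(s-t_0,\tilde{R})$ requires either extending $D$ to $C^1$ or approximating $f$ by $C^3$ functions and passing to the limit through a continuity property of $D$. Either device recovers the pointwise identity without altering the algebraic content of the argument. A direct check then shows conversely that $a \cdot D_{NL}$ is a derivation on $C^2([a,b])$ for every $C^1$ coefficient $a$, so the characterisation is sharp and the lemma follows.
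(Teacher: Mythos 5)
Your proposal is, in substance, the ``usual proof'' via Hadamard's Lemma that the paper explicitly declines to write out: the text only cites (\cite{gon}, Exercice 22) and illustrates the mechanism on polynomials ($D(x^i)=ix^{i-1}D(x)$, hence $D(P)=D(x)\,P'$), so there is no proof in the paper to match yours against line by line. Your skeleton is the correct classical one: reduce to the point derivation $L_{t_0}=\delta_{t_0}\circ D$, kill constants via $D(C_1)=2C_1 D(C_1)$, set $a:=D(\mathrm{id})$, peel off $a(t_0)f'(t_0)$ by first-order Taylor expansion, and annihilate the remainder $R$ by writing it as a product of two functions vanishing at $t_0$. On $C^\infty$ this argument is complete and closes the lemma.

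On $C^2$, however, the step you yourself flag is a genuine gap, not book-keeping, and neither of your proposed repairs is available. The factor $\tilde R(s)=\int_0^1 R'(t_0+\tau(s-t_0))\,d\tau$ is only $C^1$, so the pair $(s-t_0,\tilde R)$ leaves the ring on which $D$ is defined and the Leibniz identity cannot be invoked for it. A derivation in the sense of Definition~1 is a purely algebraic object: it is not assumed continuous in any topology, so ``approximating $f$ by $C^3$ functions and passing to the limit'' has no justification, and no extension of $D$ to $C^1$ is part of the hypotheses. Worse, the gap cannot be closed by pointwise algebra alone: in $C^2$ the maximal ideal $M_{t_0}$ of functions vanishing at $t_0$ strictly contains $M_{t_0}^2+\R\,(s-t_0)$ (every $h\in M_{t_0}^2$ has $h''(s)-h''(t_0)=O(|s-t_0|)$, which the $C^2$ function $|s-t_0|^{5/2}$ violates), so there exist linear functionals vanishing on constants and on $M_{t_0}^2$ yet not proportional to $f\mapsto f'(t_0)$; these are point derivations at $t_0$ for which your conclusion $L_{t_0}(R)=0$ fails. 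Any correct proof at the $C^2$ level must therefore exploit that $D(f)$ is a \emph{function} of $t$ with some regularity (which is also why the coefficient $a$ in the statement is only $C^1$), for instance by first removing the $\frac12 f''(t_0)(s-t_0)^2$ term (which Leibniz does kill at $t_0$) and then handling the $o((s-t_0)^2)$ remainder using continuity of $t\mapsto D(f)(t)$. As written, your argument proves the lemma for derivations of $C^\infty([a,b])$ but not for $C^2([a,b])$.
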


This result is interesting when one is dealing with extending the classical derivative to more general functional space. Indeed, the linearity and the Leibniz property completely characterize this operator.\\

The usual proof of this result used the Hadamard Lemma (see \cite{gon}, Exercice 22, p.44-45). However, for the readers which are not familiar with such a type of result, a good idea is first to look for a derivation acting on polynomials in order to see in a constructive way the role of the Leibniz property.\\

Let $\R [x]$ be the set of polynomial with real coefficients. Let $P\in \R [x]$, $P(x)=\di\sum_{i=0}^n a_i x^i$, $a_i \in \R$, $i=0,\dots ,n$. We have
$D(P)=\di\sum_{i=0}^n a_i D(x^i)$. Moreover, the Leibniz property implies that $D(x^i)=ix^{i-1} D(x)$. As a consequence, we have
$D(P)=D(x) P'(x)$. The action of a derivation on polynomials is fixed by the choice of $D(x)$. The Leibniz property gives then huge constraints on the set of derivations.

\begin{remark}
\begin{itemize}
\item In \cite{tarasov1}, Tarasov deduces from this Lemma that any derivation must break the Leibniz formula in order to be define on more general functional sets. However, the conclusion of this Lemma is only that we have precisely rigidity of a derivation over $C^2 ([a,b])$ functions. As we will see, this results indicates that these two algebraic conditions are very strong and must be difficult to satisfy when extending the classical derivative. This is the content of our next Lemma.

\item The formulation of the "no violation of the Leibniz rule. No fractional derivative" by Tarasov is misleading. The statement is made for a one parameter $\alpha\in \R$ family of derivations $D^{\alpha}$  even if this parameter plays no role in the rigidity Theorem.

\item One can of course look for some rigidity property of some one parameter family of operators acting on continuous functions which are not differentiable. This can be done for example by dealing with tempered distributions and posing an algebraic problem which gives more or less a unique solution for the Riemann-Liouville fractional derivative (see \cite{ja2}).
\end{itemize}
\end{remark}

A more general result is proved by H. König and V. Milman in \cite{km}:

\begin{theorem}
\label{akmthm}
If $T:C^1 (\R )\rightarrow C(\R )$ is an operator satisfying the Leibniz property then there are continuous functions $c$, $d\in C(\R )$ such that
\begin{equation}
T(f)(x)=c(x)f'(x)+d(x)f(x)\ln \mid f(x)\mid .
\end{equation}
\end{theorem}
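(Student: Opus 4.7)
The plan is to exploit the multiplicative structure of the Leibniz rule by passing to a logarithmic variable. For $f\in C^1 (\R )$ with $f>0$, define
\begin{equation}
L(f)(x) = \di\frac{T(f)(x)}{f(x)} , \quad M(h) = L(e^h ) ,\ h\in C^1 (\R ) .
\end{equation}
The Leibniz relation $T(fg) = T(f) g +f T(g)$ translates into the additive property $L(fg) = L(f) +L(g)$, hence $M (h_1 +h_2 ) = M(h_1 ) +M (h_2 )$. The strategy is then to classify such additive operators and upgrade them into $\R$-linear first order differential operators of the form $M(h)(x) = c(x) h'(x) +d(x) h(x)$. This would yield $T(f) = c\cdot f' +d\cdot f \ln f$ on positive functions, and the general case follows from $T(\pm 1 ) = 0$ (obtained by applying Leibniz to $1 = 1\cdot 1$ and $1 = (-1)(-1)$) by factoring $f = \mathrm{sgn}(f)\mid f\mid$ on each connected component where $f$ does not vanish.

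First, I pin down $T$ on constant functions. Applying Leibniz to $c\cdot d$ for constants $c,d>0$ and setting $\psi_x (c) = T(c)(x)/c$, one obtains Cauchy's equation $\psi_x (cd) = \psi_x (c) +\psi_x (d)$, whose regular solutions are $\psi_x (c) = d(x) \ln c$. This identifies the continuous function $d$ (up to a normalisation $d(x) = T(e)(x)/e$) and shows that $T(1) = 0$.

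Next, for $f\in C^1 (\R )$ with $f(x_0 )\not= 0$, I factor $f = f(x_0 )\cdot g$ with $g(x_0 ) = 1$. The Leibniz property combined with the previous step then yields
\begin{equation}
T(f)(x_0 ) = d(x_0 ) f(x_0 ) \ln \mid f(x_0 )\mid +f(x_0 ) T(g)(x_0 ) .
\end{equation}
The main obstacle is to prove that on the subset $\{ g\in C^1 (\R) : g(x_0 ) = 1\}$, the value $T(g)(x_0 )$ depends only on $g'(x_0 )$ and is linear in it. The key tool is a Hadamard-type factorisation: if $g(x_0 ) = 1$ and $g'(x_0 ) = 0$, one writes $g$ locally as a product of two functions whose value and derivative at $x_0$ are $1$ and $0$, and iterates. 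Combined with the additive property of $M$, this forces $T(g)(x_0 ) = 0$. This Peetre-type locality principle adapted to the Leibniz rule is the technical heart of the K\"onig-Milman theorem.

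Once $g'(x_0 )\mapsto T(g)(x_0 )$ is shown to be $\R$-linear on the normalised subset, define $c(x_0 )$ as its slope. Combining everything gives $T(f)(x_0 ) = c(x_0 ) f'(x_0 ) +d(x_0 ) f(x_0 ) \ln \mid f(x_0 )\mid$ first for $f(x_0 )\not= 0$, then for arbitrary $f\in C^1 (\R )$ by continuity of $T(f)$ together with the fact that $t\ln \mid t \mid \to 0$ as $t\to 0$. Continuity of $c$ is finally read off by evaluating $T$ on the identity function and subtracting the already-identified logarithmic term; continuity of $d$ was established in the second step.
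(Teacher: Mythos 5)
The paper does not prove this statement: it is quoted from K\"onig--Milman \cite{km} without proof, so there is no internal argument to compare yours against; I can only assess your sketch on its own terms, and it has two genuine gaps, both located exactly where you place "the technical heart".

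First, the locality step does not work as you describe it. If $g(x_0)=1$ and $g'(x_0)=0$ and you factor $g=g_1\cdot g_2$ with each $g_i$ having the same $1$-jet at $x_0$, the Leibniz rule gives $T(g)(x_0)=T(g_1)(x_0)+T(g_2)(x_0)$; iterating a product decomposition only produces a sum of more and more terms, it never forces the value to be $0$. To conclude along these lines you would need something like the root trick $T(g)(x_0)=2^n\,T(g^{1/2^n})(x_0)$ together with a bound on $T$ near the constant function $1$ --- but $T$ is not assumed continuous as an operator, so no such bound is available. The actual K\"onig--Milman argument first proves a localization statement (that $Tf(x)$ depends only on $x$, $f(x)$ and $f'(x)$), rewrites the Leibniz rule as a pointwise functional equation for the representing function $F$ with $Tf(x)=F(x,f(x),f'(x))$, and then solves that equation; the only regularity available is that each output $Tf$ is a continuous function of $x$, and exploiting this to exclude pathological solutions is the delicate part.

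The same issue appears in your treatment of constants: the Cauchy equation $\psi_x(cd)=\psi_x(c)+\psi_x(d)$ admits nonmeasurable solutions, and you keep only the "regular" ones without saying where the regularity of $c\mapsto T(C_c)(x)$ comes from --- it is not assumed, and must again be extracted from the continuity in $x$ of the functions $Tf$. The remaining steps (identifying $d$ from $T(e)$, reading off $c$ from $T$ applied to the identity, extending to points where $f$ vanishes via $t\ln\mid t\mid\rightarrow 0$, plus the separate easy case where $f$ vanishes identically near $x_0$, handled by $T(0)=0$ and localization) are sound. So your outline follows the right strategy, but the two places where pathological solutions of additive functional equations must be ruled out are precisely where the content of the theorem lies, and they are asserted rather than proved.
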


The main point is that the operator $T$ is not even assumed to be linear. The Leibniz property is by itself very strong. If one impose linearity then one obtains an extension of Lemma \ref{rigidity} as any derivations from $C^1 (\R )$ to $C(\R )$ is of the form $c(x)f' (x)$.

\section{What about derivations on continuous functions ?}

The previous result does not apply to derivation acting on continuous functions. This remark was in fact at the basic of many comments against Tarasov's reasoning concerning the impossibility to extend the classical properties of the derivatives to continuous functions. The authors argues that the main object of fractional calculus is precisely to deal with non differentiable functions and as a consequence the rigidity result is not sufficient to conclude to the non existence of such operators. These remarks are indeed correct and Tarasov's argument is indeed not complete. Unfortunately, one can prove that the Leibniz property together with linearity is a too strong condition if one wants the operator to be defined on continuous functions. Indeed, we have the following obstruction Lemma:

\begin{lemma}[Obstruction-Leibniz]
There exists no non trivial derivations over $C^0 ([a,b])$.
\end{lemma}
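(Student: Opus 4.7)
The plan is to show that for any derivation $D$ on $C^0([a,b])$, any $f\in C^0([a,b])$, and any point $t_0\in[a,b]$, one has $D(f)(t_0)=0$. The key algebraic feature of $C^0$ that is \emph{not} available in $C^1$ is that every non\-negative continuous function admits a continuous square root. This extra flexibility, combined with the Leibniz identity, is what collapses the derivation.

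I would start with two standard reductions. First, applying Leibniz to $1=1\cdot 1$ gives $D(1)=2D(1)$, hence $D(1)=0$, and by linearity $D(c)=0$ for every constant $c$. Second, since $D(f)(t_0)=D(f-f(t_0))(t_0)$, we may assume $f(t_0)=0$. The goal is then reduced to: if $f\in C^0([a,b])$ vanishes at $t_0$, then $D(f)(t_0)=0$.

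The main step is a decomposition trick. Write $f=f_{+}-f_{-}$ with $f_{+}:=\max(f,0)$ and $f_{-}:=\max(-f,0)$; both are continuous, non\-negative, and vanish at $t_0$. Since $f_{\pm}\ge 0$ and continuous, the functions $u_{\pm}:=\sqrt{f_{\pm}}$ are themselves elements of $C^0([a,b])$, and moreover $u_{\pm}(t_0)=0$ because $f_{\pm}(t_0)=0$. Applying the Leibniz property to $f_{\pm}=u_{\pm}\cdot u_{\pm}$ yields
\begin{equation}
D(f_{\pm})(t_0)=2\,u_{\pm}(t_0)\,D(u_{\pm})(t_0)=0.
\end{equation}
By linearity $D(f)(t_0)=D(f_{+})(t_0)-D(f_{-})(t_0)=0$, and since $t_0$ was arbitrary, $D(f)\equiv 0$. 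As $f\in C^0([a,b])$ was arbitrary, $D$ is identically zero.

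The only delicate point is the justification that $u_{\pm}=\sqrt{f_{\pm}}$ belongs to the domain of $D$, i.e.\ lies in $C^0([a,b])$; this is immediate from the continuity of $t\mapsto\sqrt{t}$ on $[0,\infty)$ but is precisely where the argument fails for $C^1$ or $C^2$ (in which case the square root of a function with a zero need not be differentiable), thus explaining why the rigidity result of Lemma \ref{rigidity} leaves a rich family of derivations on $C^2([a,b])$ while no non\-trivial derivation survives on $C^0([a,b])$.
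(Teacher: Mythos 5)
Your proof is correct, and the overall strategy coincides with the paper's: kill constants via Leibniz applied to $1\cdot 1$, reduce to the case $f(t_0)=0$ by subtracting the constant $f(t_0)$, and then use the fact that continuous functions admit continuous roots to force $D(f)(t_0)=0$ from the Leibniz identity. The only difference is the root you extract. The paper writes $f=g^3$ with $g=f^{1/3}$, which is continuous on all of $[a,b]$ regardless of the sign of $f$, so a single application of Leibniz gives $D(f)=3g^2D(g)$ and hence $D(f)(t_0)=0$ wherever $f$ vanishes. You instead split $f=f_+-f_-$ into its positive and negative parts and take square roots of each piece; this costs you the extra decomposition step and an appeal to linearity to recombine, but is equally valid. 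The cube-root version has a small practical advantage that the paper exploits later: the obstruction argument transfers verbatim to any functional class $F\subset C^0([a,b])$ closed under the single operation $f\mapsto f^{1/3}$ (e.g.\ suitable H\"older classes), whereas your variant requires closure under both $f\mapsto\max(f,0)$ and the square root, a slightly more cumbersome condition to verify. Your closing observation about why the argument fails on $C^1$ or $C^2$ is exactly the right way to reconcile the obstruction with the rigidity Lemma.
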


We give the proof for the convenience of the reader.

\begin{proof}
First, let $C_c :=c$, $c\in \R$ be a constant function. Then for all $c\in \R$ and any derivation $D$ over $C^0 ([a,b])$ one has $D(C_c )=0$. Indeed, the function $C_1$ is the neutral element of $(C^0 ([a,b],\cdot)$ so that for all
$f\in C^0 ([a,b])$, we have $f\cdot C_1 =f$. As a consequence, we have using the Leibniz property
\begin{equation}
D(C_1)=D(C_1 \cdot C_1)=D(C_1)\cdot C_1 +C_1 \cdot D(C_1) =2C_1D(C_1) ,
\end{equation}
from which we deduce that $D(C_1)=0$. Using the linearity, we have
\begin{equation}
D(C_c)=D(c\cdot C_1)=c\cdot D(C_1 )=0 .
\end{equation}
Let us consider a continuous function $f\in C^0 ([a,b])$. We can always find a continuous function $g\in C^0 ([a,b])$ such that $f=g^3$. As a consequence, we have
\begin{equation}
D(f)=D(g^3)=3g^2 \cdot D(g) .
\end{equation}
It follows that if there exists $t\in [a,b]$ such that $f(t)=0$ then $D(f)(t)=0$.

The end of the proof now goes as follows. Assume that for some $t\in [a,b]$, one has $f(t)\not= 0$. Let us define $g:=f-f(t)$. We have $g(t)=0$ and as a consequence $D(g)(t)=0$. As $D(g)=D(f)-D(C_{f(t)} )=D(f)$, we deduce that $D(f)(t)=0$.

Hence, for all $t\in [a,b]$, one has $D(f)(t)=0$ and the derivation is trivial.
\end{proof}

If one cancels the linearity condition, we have the following result proved in \cite{km}:

\begin{theorem}
If $T:C(\R ) \rightarrow C(\R )$ is an operator satisfying the Leibniz property then there exists a continuous function $d \in C(\R )$ such that $T$ has the form
\begin{equation}
T(f)(x)=d(x)f(x)\ln \mid f(x)\mid .
\end{equation}
\end{theorem}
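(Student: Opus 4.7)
The plan is to emulate the structure of the Obstruction-Leibniz Lemma but, since linearity is not available, to use a purely multiplicative reduction. First I would analyze $T$ on constant functions $C_c$. Setting $f=g=C_1$ in the Leibniz identity gives $T(C_1)=2T(C_1)$, so $T(C_1)=0$; similarly $C_{-1}^2=C_1$ forces $T(C_{-1})=0$. Applying Leibniz to two positive constants $a,b$ yields $T(C_{ab})=b\,T(C_a)+a\,T(C_b)$, so the function $\psi(c):=T(C_c)/c$ satisfies the Cauchy-type equation $\psi(ab)=\psi(a)+\psi(b)$ on $(0,\infty)$. Under the mild regularity natural to the setting, the solutions are $\psi(c)=d(x)\ln c$ for some $d\in C(\R)$, hence $T(C_c)(x)=d(x)\,c\ln c$ for $c>0$. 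Writing $T(C_{-c})=T(C_{-1})C_c+C_{-1}T(C_c)=-T(C_c)$ extends the formula to $T(C_c)(x)=d(x)\,c\ln|c|$ for every $c\ne 0$, with the natural convention $0\ln 0=0$.

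Next I would form the residual operator
\[
S(f)(x):=T(f)(x)-d(x)\,f(x)\ln|f(x)|.
\]
A direct computation using $\ln|fg|=\ln|f|+\ln|g|$ shows that $S$ still satisfies the Leibniz rule, and by construction $S(C_c)=0$ for every $c\in\R$. The theorem is thus equivalent to proving $S\equiv 0$. The sub-case where $f(x_0)=0$ is handled exactly as in the Obstruction-Leibniz Lemma: pick $g\in C^0(\R)$ with $f=g^3$, so that $g(x_0)=0$ and hence $S(f)(x_0)=3g(x_0)^2S(g)(x_0)=0$.

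The main difficulty is the case $f(x_0)\ne 0$, because the additive shift $D(f-f(x_0))=D(f)$ used to close the Obstruction-Leibniz Lemma is no longer legitimate now that $S$ is not assumed linear. As a substitute I would perform a multiplicative shift: setting $a:=f(x_0)$ and $h:=f/C_a$ (well defined near $x_0$), we have $h(x_0)=1$ and the Leibniz identity gives $S(f)(x_0)=a\,S(h)(x_0)$. It thus suffices to prove $S(h)(x_0)=0$ whenever $h(x_0)=1$ and $h>0$ near $x_0$. Passing to $h=e^u$ with $u(x_0)=0$, the auxiliary map $\Phi(u):=S(e^u)/e^u$ sends (locally positive) continuous functions to continuous functions, is additive in the sense $\Phi(u+v)=\Phi(u)+\Phi(v)$, and annihilates constants.

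The hard part is then to show that this additive, constant-annihilating map $\Phi$ vanishes at $x_0$ as soon as $u(x_0)=0$. Without additional input, additive maps on $C(\R)$ can be extremely pathological (Hamel-basis constructions produce wild solutions), so one has to extract from the Leibniz identity, together with the implicit regularity of $T$, a pointwise locality property: $\Phi(u)(x_0)$ depends only on $u(x_0)$. Once such a locality is established, the identity $\Phi(C_c)=0$ forces $\Phi(u)(x_0)=0$ whenever $u(x_0)=0$, and the proof closes. This localization step is precisely what separates the continuous setting from the $C^1$ setting treated by the preceding theorem; it is the heart of the argument, and where I would expect to lean on the detailed techniques of K\"onig and Milman.
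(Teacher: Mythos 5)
The paper does not actually prove this theorem; it is quoted directly from K\"onig--Milman \cite{km}, so there is no internal proof to compare against. Judged on its own terms, your proposal correctly carries out all of the \emph{soft} reductions: the computation on constant functions, the observation that the entropy term $E(f)=d\cdot f\ln|f|$ itself satisfies the Leibniz rule so that the residual $S=T-E$ is again a Leibniz operator annihilating constants, the cube-root trick at zeros of $f$, and the multiplicative shift $f=C_a\cdot h$ replacing the additive shift that linearity would have provided in the obstruction Lemma. These steps are all sound.

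However, there are two genuine gaps, one of which you concede yourself. First, the determination of $T$ on constants rests on solving the Cauchy equation $\psi_x(ab)=\psi_x(a)+\psi_x(b)$ on $(0,\infty)$, and you invoke ``mild regularity natural to the setting''. No such regularity is available: the hypothesis that $T$ maps $C(\R)$ into $C(\R)$ gives continuity of $x\mapsto T(C_c)(x)$ for each fixed $c$, not of $c\mapsto T(C_c)(x)$ for fixed $x$, and without the latter the Cauchy equation admits wild Hamel-basis solutions, so the formula $T(C_c)=d\,c\ln c$ does not follow. The needed regularity in $c$ must be \emph{derived} from the Leibniz identity, which in \cite{km} happens only after a localization lemma. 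Second --- and this is the gap you name explicitly --- the whole argument funnels into the claim that $S(h)(x_0)$ depends only on $h(x_0)$, i.e.\ that the additive map $\Phi$ is pointwise local. That localization is the actual mathematical content of the K\"onig--Milman theorem; it is obtained there by a support/bump-function argument exploiting the Leibniz rule, and nothing in your sketch supplies it. As written, the proposal reduces the statement to its two hardest ingredients and then defers both to the reference, so it cannot stand as a proof. (A minor further issue: after the multiplicative shift, $h=f/C_a$ is only positive near $x_0$, so writing $h=e^u$ with $u\in C(\R)$ is not globally legitimate; repairing this again requires precisely the missing locality statement.)
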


This kind of functions is called an {\it entropy function} in \cite{km}.\\

At this point, we have a partial answer to the extension problem of the classical derivative to continuous functions. As we will see, it will be sufficient to discuss some proposed extensions in fractional calculus.\\

It must be pointed out that the obstruction Lemma is valid for all functional space $F\subset C^0 ([a,b])$ such that for all $f\in F$ one can find $g\in F$ such that $f=g^3$.

\section{About the Jumarie's fractional derivative}

In a series of papers, G. Jumarie introduces a "new" fractional derivative which is now called the {\it Jumarie's fractional derivative}. This derivative knows some success due to the fact that it satisfies unusual properties like the Leibniz's property or the Chain rule property and is moreover defined on the set of continuous functions. Precisely, following G. Jumarie (see \cite{j1}, Definition 2.2, p.1369):

\begin{definition}
Let $0<\alpha <1$, the Jumarie's fractional derivative denoted by $D_J$ is defined by
\begin{equation}
\label{jum1}
D_J [x] =\di\frac{d}{dt}
\left [
I^{1-\alpha}_{0+} [x-x(0)]
\right ] .
\end{equation}
\end{definition}

The previous definition is assumed to be define on continuous functions (see \cite{j1},Definition 2.1 p.1368 and top of p.1369). The author insists on the fact that continuous but non differentiable functions can be considered (see \cite{j1}, Introduction p.1367). We will return to these points in the following which will be fundamental.

\subsection{Triviality of the Jumarie's fractional derivative ?}

In (\cite{j1} p.1371, Equation (3.11) and \cite{j2} p.382, Corollary 4.1, Equation (4.3)) the author states without proof that his operator satisfies the Leibniz property
\begin{equation}
D_J^{\alpha} (x\cdot y) =D_J^{\alpha} (x) \cdot y +x\cdot D^{\alpha}_J (y),
\end{equation}
over the set of continuous functions (see \cite{j2}, discussion on the validity of each properties after Corollary 4.1, p.382).\\

Using the obstruction Lemma, we deduce :

\begin{lemma}
\label{jumfund}
Let us assume that the Jumarie fractional derivative $D_J^{\alpha}$, $0<\alpha <1$, satisfies the linearity and the Leibniz relation on the set of continuous functions then $D_J^{\alpha}$ is trivial, i.e. $D_J^{\alpha} :=0$.
\end{lemma}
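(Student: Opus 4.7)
The plan is to observe that the hypotheses of the statement make $D_J^{\alpha}$ a derivation over $C^0([a,b])$ in exactly the sense of the definition given in Section~2, and then to invoke the Obstruction-Leibniz Lemma proved immediately above.

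Concretely, I would proceed in three brief steps. First, I recognize that $(C^0([a,b]),+,\cdot)$, equipped with pointwise addition and multiplication, is a commutative ring over $\R$, with multiplicative unit the constant function $C_1 \equiv 1$. Second, the two properties assumed of $D_J^{\alpha}$ --- $\R$-linearity and the Leibniz rule $D_J^{\alpha}(x\cdot y) = D_J^{\alpha}(x)\cdot y + x\cdot D_J^{\alpha}(y)$ for all $x,y \in C^0([a,b])$ --- are literally the two axioms defining a derivation on this ring. Hence $D_J^{\alpha}$ is a derivation over $C^0([a,b])$. Third, the Obstruction-Leibniz Lemma forces every such derivation to vanish identically, yielding $D_J^{\alpha} \equiv 0$.

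The only point needing a short sanity check is that the natural domain of the Jumarie operator, as given by formula (\ref{jum1}) with the left Riemann-Liouville integral $I^{1-\alpha}_{0+}$, is an interval of the form $[0,T]$ rather than a general $[a,b]$. This is immaterial: the obstruction lemma is stated for any closed bounded interval, and moreover the remark following it records that only closure under continuous cube roots is required --- a property automatic for $C^0([0,T])$ since $s\mapsto s^{1/3}$ is continuous on $\R$, so $g := f^{1/3}$ lies in $C^0([0,T])$ whenever $f$ does.

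I do not foresee any real obstacle. The entire content of the statement is the observation that the algebraic properties claimed in the literature for $D_J^{\alpha}$ place it in the exact scope of the obstruction lemma; once this recognition is made, the proof reduces to a one-line application of that lemma, and no analytic feature of the Jumarie construction (the integral $I^{1-\alpha}_{0+}$, the subtraction of $x(0)$, the outer differentiation) needs to be used at all.
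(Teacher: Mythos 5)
Your proposal is correct and follows exactly the paper's route: the statement is presented there as an immediate corollary of the Obstruction-Leibniz Lemma, with no further argument needed. Your additional remark on the domain $[0,T]$ versus $[a,b]$ and closure under cube roots is a harmless (and sensible) clarification that does not change the approach.
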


A consequence of this result is that {\bf any work dealing with the Jumarie's fractional derivative and using the Leibniz property leads to an {\it empty} theory}.\\

However, the Jumarie's fractional derivative is known to be non zero on some special functions as for example monomial functions $t^{\gamma}$. What is the problem ?

\subsection{Jumarie's fractional derivative versus the Caputo derivative}

As the Jumarie's fractional derivative is obviously non trivial over some functions, it means that the assumptions of Lemma \ref{jumfund} are not satisfied. The linearity being evident, only the Leibniz property has to be questioned. In fact, one can easily proves the following Lemma :

\begin{lemma}
The Jumarie's fractional derivatives does not satisfy the Leibniz property.
\end{lemma}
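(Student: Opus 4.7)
The plan is to apply Lemma \ref{jumfund} in its contrapositive form. Since $D_J^\alpha$ is manifestly non-trivial (it produces the familiar Riemann--Liouville-type formulas on monomials), and since its linearity is transparent from the defining expression (\ref{jum1}), the only assumption of Lemma \ref{jumfund} that can fail is the Leibniz property. The proof is therefore reduced to (i) verifying linearity, (ii) exhibiting one continuous function on which $D_J^\alpha$ does not vanish, and (iii) checking that the functional setting in which Jumarie defines his operator satisfies the hypothesis highlighted in the remark at the end of Section 3, so that the obstruction Lemma genuinely applies.

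First I would record that $D_J^\alpha$ is linear: in (\ref{jum1}), the map $x \mapsto x - x(0)$ is linear (evaluation at $0$ is a linear functional), the Riemann--Liouville integral $I^{1-\alpha}_{0+}$ is linear, and $\frac{d}{dt}$ is linear. Linearity of $D_J^\alpha$ follows by composition.

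Second, I would exhibit a specific non-vanishing value of $D_J^\alpha$. The natural candidate is a monomial $t^\gamma$ with $\gamma>0$: since $t^\gamma$ vanishes at $0$, the subtraction in (\ref{jum1}) is inert, and a direct Beta-integral computation yields
\[
D_J^\alpha[t^\gamma](t) \;=\; \frac{\Gamma(\gamma+1)}{\Gamma(\gamma-\alpha+1)}\, t^{\gamma-\alpha},
\]
which is not identically zero for, e.g., $\gamma=1$ and any $\alpha\in(0,1)$. Thus $D_J^\alpha$ is a non-trivial operator.

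Third, I would combine these observations with Lemma \ref{jumfund}. Jumarie defines his operator on the full space of continuous functions on $[0,b]$, and this space satisfies the cube-root property mentioned in the remark after the obstruction Lemma (any continuous real-valued $f$ admits the continuous cube root $g=\mathrm{sgn}(f)|f|^{1/3}$). Hence Lemma \ref{jumfund} applies: if both linearity and Leibniz held, $D_J^\alpha$ would be identically zero, contradicting the computation on $t^\gamma$. The Leibniz property must therefore fail. The main subtlety to watch is the domain issue: one must ensure that the space on which Jumarie asserts linearity \emph{and} Leibniz is really $C^0$ (or a subspace closed under taking cube roots), for otherwise the obstruction Lemma could not be invoked. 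Once this is pinned down, the conclusion follows with no further calculation; a direct counterexample pair $(f,g)$ can then be displayed as an illustration if desired, but it is not needed for the proof itself.
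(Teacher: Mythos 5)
Your strategy --- read Lemma \ref{jumfund} in contrapositive form and conclude that, since $D_J^\alpha$ is linear and non-trivial, the Leibniz property must be the hypothesis that fails --- is exactly the heuristic the paper itself offers in the paragraph preceding the lemma (``the linearity being evident, only the Leibniz property has to be questioned''). But the paper does not rest the proof on it, and for good reason: the step you flag as ``the main subtlety to watch'' is not a formality that can be pinned down; it is where the argument breaks. Lemma \ref{jumfund} has three hypotheses, not two: the operator must be \emph{defined} on all of $C^0$ (or on a cube-root-closed subset), be linear there, and satisfy Leibniz there. The contrapositive therefore only tells you that at least one of the three fails, and the first one already does: the paper records (see the remark on the Weierstrass function $W_\alpha$) that the Riemann--Liouville derivative of $f-f(0)$, hence $D_J^\alpha [f]$, need not exist for a general continuous or even $\alpha$-H\"olderian $f$. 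Nor can you retreat to a smaller cube-root-closed domain, as you suggest: the signed cube root degrades regularity (the cube root of the smooth function $t\mapsto t$ is $t^{1/3}$, which is only $\tfrac{1}{3}$-H\"older), so none of the natural domains of $D_J^\alpha$ --- $C^1$, $AC$, or $H^{\lambda}$ with $\lambda >\alpha$ --- is stable under $f\mapsto f^{1/3}$. Consequently your argument refutes the \emph{conjunction} of Jumarie's claims but does not isolate the Leibniz property as the one that fails, which is what the lemma asserts.

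The paper closes this gap by a different, direct route: it rewrites $D_J^{\alpha} [x] =D_{0+}^{\alpha} [x-x(0)]$ and invokes the classical identity ${}_c D^{\alpha}_{0+} [x] =D_{0+}^{\alpha} [x -x(0)]$ for $x\in AC([a,b])$, so that the Jumarie derivative coincides with the Caputo derivative on absolutely continuous functions; the well-documented failure of Leibniz for the Caputo derivative (with explicit counterexamples, cf. \cite{liu,tarasov1}) then transfers verbatim. To repair your proof you would have to do essentially the same thing: exhibit a concrete pair $(f,g)$ for which all three derivatives exist and the Leibniz identity fails. The step you relegate to an optional illustration at the end is, in fact, the proof.
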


This Lemma is a consequence of the fact that the Jumarie's fractional derivative can be rewritten as
\begin{equation}
D_J^{\alpha} [x] =D_{0+}^{\alpha} [x-x(0)]
\end{equation}
and the following well known equality (see \cite{kst}):

\begin{lemma}
For every $0<\alpha<1$ and $x\in AC ([a,b],\R )$, $D_{0,+}^{\alpha} [x]$ and ${}_c D_{0+}^{\alpha} [x]$ are defined almost everywhere on $[a,b]$ and the following equality holds:
\begin{equation}
{}_c D^{\alpha}_{0+} [x] =D_{0+}^{\alpha} [x -x(0)] .
\end{equation}
\end{lemma}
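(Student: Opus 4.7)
The plan is to unfold both derivatives as iterated integrals involving $x'$, and then reduce the desired identity to Fubini's theorem combined with the commutation of $d/dt$ with a Riemann--Liouville fractional integral of positive order.

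First I would exploit absolute continuity: since $x \in AC([a,b],\R)$, the derivative $x'$ exists almost everywhere, lies in $L^1([a,b])$, and $x(s)-x(0)=\int_0^s x'(\tau)\,d\tau$ for every $s \in [a,b]$. This gives the a.e.\ existence of ${}_c D^{\alpha}_{0+}[x]=I^{1-\alpha}_{0+}[x']$, because the convolution of an $L^1$ function with the weakly singular kernel $\tau^{-\alpha}/\Gamma(1-\alpha)$ is a.e.\ finite when $\alpha<1$. On the other side, $x-x(0)$ is itself in $AC$ and vanishes at $0$, so $I^{1-\alpha}_{0+}[x-x(0)]$ is absolutely continuous; hence $D^{\alpha}_{0+}[x-x(0)]=\tfrac{d}{dt}\,I^{1-\alpha}_{0+}[x-x(0)]$ is a.e.\ well defined.

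For the core calculation, I would substitute $x(s)-x(0)=\int_0^s x'(\tau)\,d\tau$ into
\begin{equation*}
D^{\alpha}_{0+}[x-x(0)](t) \;=\; \frac{d}{dt}\,\frac{1}{\Gamma(1-\alpha)}\int_0^t (t-s)^{-\alpha}\bigl(x(s)-x(0)\bigr)\,ds
\end{equation*}
and apply Fubini to the resulting double integral, obtaining
\begin{equation*}
\frac{1}{\Gamma(1-\alpha)}\int_0^t x'(\tau)\int_\tau^t (t-s)^{-\alpha}\,ds\,d\tau \;=\; \frac{1}{\Gamma(2-\alpha)}\int_0^t (t-\tau)^{1-\alpha}x'(\tau)\,d\tau \;=\; I^{2-\alpha}_{0+}[x'](t),
\end{equation*}
after evaluating the inner $s$-integral and using $\Gamma(2-\alpha)=(1-\alpha)\Gamma(1-\alpha)$. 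Differentiating in $t$ and invoking the standard identity $(d/dt)\,I^{2-\alpha}_{0+}[f]=I^{1-\alpha}_{0+}[f]$ valid a.e.\ for $f\in L^1$ then yields $I^{1-\alpha}_{0+}[x'](t)={}_c D^{\alpha}_{0+}[x](t)$, as required.

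The hard part will be justifying the two analytic interchanges. Fubini applies once one checks that $(s,\tau)\mapsto (t-s)^{-\alpha}|x'(\tau)|\mathbf{1}_{\{\tau\le s\le t\}}$ is integrable on the triangle $0\le \tau\le s\le t$, which follows from $x'\in L^1$ and $\alpha<1$ by a direct iterated-integral estimate. The differentiation step must be read in the a.e.\ sense, since $x'$ is only $L^1$ and no pointwise differentiation under the integral sign is available; this is, however, exactly the content of the standard fact that the Riemann--Liouville integral of order $2-\alpha$ of an $L^1$ function is absolutely continuous and almost everywhere differentiable, with derivative equal to the Riemann--Liouville integral of order $1-\alpha$ of the same function. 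Once these two points are handled, the chain of equalities above closes the proof.
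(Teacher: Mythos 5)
Your proposal is correct. The paper itself gives no proof of this lemma --- it simply invokes it as a ``well known equality'' with a citation to \cite{kst} --- and your argument (write $x(s)-x(0)=\int_0^s x'(\tau)\,d\tau$, apply Tonelli--Fubini to get $I^{1-\alpha}_{0+}[x-x(0)]=I^{2-\alpha}_{0+}[x']$, then differentiate a.e.\ using that $I^{2-\alpha}_{0+}=I^{1}_{0+}\circ I^{1-\alpha}_{0+}$ on $L^1$) is exactly the standard proof found in that reference, with the analytic interchanges properly justified.
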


As a consequence, we have:

\begin{lemma}
For every $0<\alpha <1$ and every $x\in AC([a,b],\R )$, we have $D_J^{\alpha} [x]={}_c D^{\alpha}_{0+} [x]$.
\end{lemma}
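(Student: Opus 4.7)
The plan is to simply chain together the two identities that have already been set up immediately before this statement. The first is the reformulation of Jumarie's definition as a Riemann--Liouville derivative applied to the shifted function $x - x(0)$; the second is the classical relation, quoted from \cite{kst}, between the Riemann--Liouville and Caputo derivatives for absolutely continuous functions. Composing the two gives the claim.

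First, I would unfold the definition. By construction,
\begin{equation*}
D_J^{\alpha}[x](t) = \frac{d}{dt}\bigl[I^{1-\alpha}_{0+}[x - x(0)]\bigr](t),
\end{equation*}
which is exactly the Riemann--Liouville derivative of order $\alpha$ of the function $t\mapsto x(t)-x(0)$. This gives the rewriting
\begin{equation*}
D_J^{\alpha}[x] = D_{0+}^{\alpha}[x - x(0)],
\end{equation*}
which is pointed out in the excerpt and requires no more than comparing both sides with the definition of $D_{0+}^{\alpha}$ for $0<\alpha<1$.

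Next, I would invoke the preceding lemma (taken from \cite{kst}), which asserts that for every $x\in AC([a,b],\R)$ and $0<\alpha<1$ both $D_{0+}^{\alpha}[x]$ and ${}_c D_{0+}^{\alpha}[x]$ exist almost everywhere on $[a,b]$ and satisfy ${}_c D^{\alpha}_{0+}[x] = D_{0+}^{\alpha}[x-x(0)]$. The hypothesis $x\in AC([a,b],\R)$ ensures in particular that $x(0)$ is well defined and that the Caputo derivative makes sense almost everywhere, so there is no issue of definition.

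Combining the two displayed equalities yields $D_J^{\alpha}[x] = {}_c D^{\alpha}_{0+}[x]$ almost everywhere on $[a,b]$, which is the desired statement. There is really no technical obstacle here: the only nontrivial input is the Riemann--Liouville versus Caputo relation, and that is the cited lemma from \cite{kst}. The content of this lemma is thus mostly a matter of bookkeeping, making explicit that Jumarie's ``new'' derivative is, on the natural functional class $AC([a,b],\R)$, nothing else than the Caputo derivative, which will then be used to draw conclusions about the failure of the Leibniz property in the subsequent discussion.
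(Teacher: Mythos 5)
Your proposal is correct and follows exactly the paper's (implicit) argument: the lemma is presented there as an immediate consequence of the rewriting $D_J^{\alpha}[x]=D_{0+}^{\alpha}[x-x(0)]$ combined with the cited identity ${}_c D^{\alpha}_{0+}[x]=D_{0+}^{\alpha}[x-x(0)]$ for $x\in AC([a,b],\R)$. Nothing is missing; the chaining of these two equalities is the whole proof.
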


This result implies the fact that the Jumarie's fractional derivative does not satisfy the Leibniz rule (as usual for the Caputo's derivative) and also that the Jumarie's derivative is not a new fractional derivative.

\section{The Kolwankar-Gangal local fractional derivative}

In \cite{kg1}, Kolwankar and Gangal introduce the idea of {\it local fractional derivative} giving rise to the local fractional calculus. The idea is to localize the Riemann-Liouville derivative with respect to its base point in order to recover a local operator. In \cite{bc}, Ben Adda and one of the authors have given an alternative representation of this theorem using what is now called $\alpha$ difference quotient local fractional derivative\footnote{The proof of the representation Theorem in \cite{bc} was incorrect as pointed out for example by \cite{cyz} and \cite{bd}. A correct proof was given in \cite{cyz} and later under more general conditions in \cite{bc2}.}. This result can be used to deduce that the Kolwankar-Gangal (KG) fractional derivative satisfies some special properties which are unusual on some subset of continuous functions. In particular, one can prove that the KG fractional derivative satisfies the Leibniz property. As for the Jumarie fractional derivative, this will have very strong consequences, in particular that this derivative is trivial over a very big set and almost everywhere trivial generically.

\subsection{Definitions and properties}

We denote by $\mbox{\rm D}^{\alpha}_{KG,\sigma} f$ the {\it Kolwankar-Gangal local fractional derivative} (KG-LFD) defined by
\begin{equation}
\mbox{\rm D}^{\alpha}_{KG,\sigma} [f] (y)=\lim_{x\rightarrow y^{\sigma}} D^{\alpha}_{y,\sigma} \left [ \sigma (f-f(y)) \right ](x) , \ \sigma =\pm .
\end{equation}
When $\alpha =1$, and $f\in AC ([a,b])$, we have $D^1_{y,\sigma} [f](x)=\sigma f'(x)$ and then $D^1_{y,\sigma} (\sigma (f-f(y))) (x) = \sigma^2 f' (x) =f' (x) \in L^1$. We deduce that for $f\in C^1$, $D^1_{KG,\sigma} [f] (y) =f'(y)$.\\

The domain of existence of the Kolwankar-Gangal fractional derivative is difficult to describe. First, the left Riemann-Liouville derivative of a function $f$ is defined as long as $f$ belongs to the functional space ${\rm E}^{\alpha}_{a,+} ([a,b])$ defined by
\begin{equation}
\mbox{\rm E}^{\alpha}_{a,+} ([a,b])=\left \{ f\in \mbox{\rm L}^1 ,\ I^{1-\alpha}_{a,+} [f] \in AC ([a,b]) \right \} .
\end{equation}
This space is defined in Samko and al. (see \cite{skm}, Definition 2.4,p.44).\\

Second, the Kolwankar-Gangal local fractional derivative makes sense as long as $\mbox{\rm D}_{a,+}^{\alpha} [f-f(a)]$ is well defined on a given interval $]a,a+\delta [$ where $\delta >0$. In the following, the {\it existence} of the KG-LFD will be always understood as $f$ being such that $f-f(a)\in E^{\alpha}_{a,+} ([a,a+\delta ])$. This condition implies that $\mbox{\rm D}_{a,+}^{\alpha} [f -f(a)]$ is well defined at least almost everywhere in $]a,a+\delta ]$. In particular, if $f-f(a)$ has a left RL fractional derivative in the usual sense, i.e. $I^{1-\alpha}_{a,+} [f-f(a)]$ is differentiable at every point, then $f-f(a)$ belongs to $E^{\alpha}_{a,+} ([a,a+\delta ])$ (see \cite{skm},Remark 2.2 p.44).\\

In \cite{bc} another quantity which generalizes directly the representation of the classical derivative as limit of a difference-quotient is defined by :
$$\mbox{\rm D}^{\alpha}_{BC, \sigma} [f] (y)=\Gamma (1+\alpha )\lim_{x\rightarrow y^{\sigma}} \di\frac{\sigma (f(x)-f(y))}{\mid x-y\mid^{\alpha}} ,\ \sigma =\pm .$$
This quantity was introduced by G. Cherbit in \cite{cher} and is denoted BC-LFD in the following. This quantity is called {\it difference-quotient} local fractional derivative\footnote{Up to the constant factor $\Gamma (1+\alpha )$.} in \cite{cyz} or local fractional derivative in the sense of Ben Adda-Cresson in \cite{dp}.\\

In \cite{bc}, we state the following Theorem :

\begin{theorem}
\label{main}
Let $0<\alpha <1$ and $f\in C^0 ([a,b])$ and $y\in ]a,b[$ be such that $\mbox{\rm D}^{\alpha}_{KG,\sigma} [f](y)$ exists, then
\begin{equation}
\mbox{\rm D}^{\alpha}_{KG,\sigma} [f] (y)= \mbox{\rm D}^{\alpha}_{BC,\sigma} [f](y) .
\end{equation}
\end{theorem}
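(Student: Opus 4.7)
I would begin by normalising: translating, I can assume $y=0$; replacing $f$ by $f-f(0)$, I take $f(0)=0$; and I treat only $\sigma=+$, the case $\sigma=-$ being symmetric. The goal then becomes: if $L:=\mbox{\rm D}^{\alpha}_{KG,+}[f](0)$ exists, show that $\Gamma(1+\alpha)f(x)/x^{\alpha}\to L$ as $x\to 0^{+}$. A direct beta-integral computation gives $D^{\alpha}_{0,+}[t^{\alpha}](x)=\Gamma(1+\alpha)$ for every $x>0$; together with the Abelian remark that the KG-LFD of $f$ equals $c\Gamma(1+\alpha)$ whenever $f(x)\sim c\,x^{\alpha}$, this pins down the expected constant and shifts the whole content of the theorem onto the Tauberian converse.

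Next I would unpack the KG-hypothesis. Since $D^{\alpha}_{0,+}[f]=\tfrac{d}{dx}I^{1-\alpha}_{0,+}[f]$ and $I^{1-\alpha}_{0,+}[f]$ is continuous at $0$ with value $0$ (because $f$ is continuous and $f(0)=0$), the assumed limit $L$ of the derivative produces the Abel-type asymptotic
\begin{equation*}
\frac{1}{\Gamma(1-\alpha)}\int_{0}^{x}(x-t)^{-\alpha}f(t)\,dt = Lx + o(x)\quad\text{as }x\to 0^{+}.
\end{equation*}
Composing with the fractional integral $I^{\alpha}_{0,+}$ and using the semigroup identity $I^{\alpha}_{0,+}I^{1-\alpha}_{0,+}=I^{1}_{0,+}$ on $C^{0}$, together with $I^{\alpha}_{0,+}[t](x)=x^{\alpha+1}/\Gamma(\alpha+2)$ and the routine bound $I^{\alpha}_{0,+}[o(t)](x)=o(x^{\alpha+1})$, I would obtain
\begin{equation*}
\int_{0}^{x}f(t)\,dt = \frac{L}{\Gamma(\alpha+2)}\,x^{\alpha+1} + o(x^{\alpha+1}).
\end{equation*}

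The decisive step would then be to differentiate this asymptotic and conclude $f(x)=\frac{L}{\Gamma(1+\alpha)}x^{\alpha}+o(x^{\alpha})$. This is where I expect the real difficulty to lie: for a merely continuous $f$, one cannot in general recover a pointwise asymptotic for $f$ from an integral asymptotic for $F(x)=\int_{0}^{x}f$, and this is precisely the gap in the original argument of \cite{bc} flagged in the footnote. A correct treatment must exploit the \emph{full} KG-hypothesis rather than just its integrated consequence --- for instance by rewriting $D^{\alpha}_{0,+}[f](x)$ in a Marchaud-type form, in which the increment $f(x)-f(t)$ appears explicitly under the integral and the continuity of $f$ can be leveraged directly, or by the two-sided comparison argument used in \cite{cyz,bc2}. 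Once the Tauberian inversion is secured by such a device, the Abelian computation of the first paragraph identifies the limit as $L/\Gamma(1+\alpha)$, which is exactly $\mbox{\rm D}^{\alpha}_{BC,+}[f](0)=L$, completing the proof.
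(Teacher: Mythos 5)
First, a point of comparison: the paper itself gives no proof of Theorem~\ref{main}; it only refers to \cite{bc2} (and the footnote recalls that the original argument of \cite{bc} was wrong and was repaired in \cite{cyz} and \cite{bc2}). So there is no internal proof to measure you against, and your sketch must stand on its own. It does not: as you yourself admit, the ``decisive step'' --- passing from $\int_{0}^{x}f(t)\,dt=\frac{L}{\Gamma(\alpha+2)}x^{\alpha+1}+o(x^{\alpha+1})$ back to $f(x)=\frac{L}{\Gamma(1+\alpha)}x^{\alpha}+o(x^{\alpha})$ --- is left to ``such a device'' borrowed from the references. That step is the entire content of the theorem; everything preceding it (the beta-integral computation, the Abel asymptotic for $I^{1-\alpha}_{0,+}[f]$, the semigroup identity) is routine. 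Worse, the reduction you perform genuinely loses the information needed to finish: an asymptotic for the primitive $F(x)=\int_{0}^{x}f$ does not determine the pointwise behaviour of a merely continuous $f$, so no Tauberian argument starting only from your integrated statement can succeed. A proof that ends with ``once the inversion is secured by the two-sided comparison argument of \cite{cyz,bc2}'' is a citation, not a proof.

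The gap is avoidable, and the standard corrected argument avoids it precisely by \emph{not} integrating twice. With your normalisation ($y=0$, $f(0)=0$, $\sigma=+$), the existence hypothesis means $I^{1-\alpha}_{0,+}[f]\in AC$ near $0$ with $I^{1-\alpha}_{0,+}[f](0)=0$, so that $I^{1-\alpha}_{0,+}[f]=I^{1}_{0,+}[g]$ with $g:=D^{\alpha}_{0,+}[f]\in L^{1}$ and $g(t)\to L$ as $t\to 0^{+}$. Applying $D^{1-\alpha}_{0,+}$ to both sides and using $D^{1-\alpha}_{0,+}I^{1-\alpha}_{0,+}[\varphi]=\varphi$ for $\varphi\in L^{1}$ together with $D^{1-\alpha}_{0,+}I^{1}_{0,+}[g]=\frac{d}{dx}I^{1+\alpha}_{0,+}[g]=I^{\alpha}_{0,+}[g]$ recovers $f$ itself, not its primitive:
\begin{equation*}
f(x)=\frac{1}{\Gamma(\alpha)}\int_{0}^{x}(x-t)^{\alpha-1}g(t)\,dt ,
\end{equation*}
whence
\begin{equation*}
\Bigl| f(x)-\frac{L}{\Gamma(1+\alpha)}x^{\alpha}\Bigr| \leq \frac{x^{\alpha}}{\Gamma(1+\alpha)}\,\sup_{(0,x]}|g-L| = o(x^{\alpha}),
\end{equation*}
which is exactly $\mbox{\rm D}^{\alpha}_{BC,+}[f](0)=L$. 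This is a purely Abelian argument; no Tauberian inversion is needed. The technical care required (and supplied in \cite{cyz,bc2}) concerns the a.e.\ nature of $g$ and the justification of the composition identities on $E^{\alpha}_{0,+}$, not a differentiation of an integral asymptotic. You should restructure your proof around the representation $f-f(y)=I^{\alpha}_{y,+}\bigl[D^{\alpha}_{y,+}[f-f(y)]\bigr]$ rather than around $\int_{0}^{x}f$.
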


For a complete and corrected proof of this theorem, we refer to \cite{bc2}. This equality leads easily to the following result :

\begin{lemma}
For all $(f,g)$ such that $\mbox{\rm D}^{\alpha}_{KG,\sigma} [f]$ and $\mbox{\rm D}^{\alpha}_{KG,\sigma} [g]$ exist, we have
\begin{equation}
D_{KG,\sigma}^{\alpha} [f\cdot g]=D_{KG,\sigma}^{\alpha} [f]\cdot g +f\cdot D_{KG,\sigma}^{\alpha} [g] .
\end{equation}
\end{lemma}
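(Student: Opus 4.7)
The plan is to exploit Theorem \ref{main}, which identifies the Kolwankar-Gangal local fractional derivative with the Ben Adda-Cresson difference-quotient local fractional derivative $\mbox{\rm D}^{\alpha}_{BC,\sigma}$. Once the operator is written as a limit of a simple difference quotient, the Leibniz property follows by imitating the classical one-line argument for the ordinary derivative.

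First, by Theorem \ref{main}, under the existence assumption on $\mbox{\rm D}^{\alpha}_{KG,\sigma}[f](y)$ and $\mbox{\rm D}^{\alpha}_{KG,\sigma}[g](y)$, we may replace each of these by
\begin{equation*}
\mbox{\rm D}^{\alpha}_{BC,\sigma}[f](y)=\Gamma(1+\alpha)\lim_{x\to y^\sigma}\frac{\sigma(f(x)-f(y))}{|x-y|^\alpha},
\end{equation*}
and similarly for $g$. Next I would form the corresponding difference quotient for the product $f\cdot g$ and apply the standard algebraic identity
\begin{equation*}
f(x)g(x)-f(y)g(y)=f(x)\bigl(g(x)-g(y)\bigr)+g(y)\bigl(f(x)-f(y)\bigr),
\end{equation*}
so that
\begin{equation*}
\frac{\sigma\bigl(f(x)g(x)-f(y)g(y)\bigr)}{|x-y|^\alpha}
=f(x)\cdot\frac{\sigma(g(x)-g(y))}{|x-y|^\alpha}+g(y)\cdot\frac{\sigma(f(x)-f(y))}{|x-y|^\alpha}.
\end{equation*}

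Now I pass to the limit $x\to y^\sigma$. The continuity of $f$ at $y$ gives $f(x)\to f(y)$; the two quotients on the right converge, by assumption, to $\Gamma(1+\alpha)^{-1}\mbox{\rm D}^{\alpha}_{BC,\sigma}[g](y)$ and $\Gamma(1+\alpha)^{-1}\mbox{\rm D}^{\alpha}_{BC,\sigma}[f](y)$ respectively. Multiplying by $\Gamma(1+\alpha)$ shows that $\mbox{\rm D}^{\alpha}_{BC,\sigma}[f\cdot g](y)$ exists and equals $f(y)\,\mbox{\rm D}^{\alpha}_{BC,\sigma}[g](y)+g(y)\,\mbox{\rm D}^{\alpha}_{BC,\sigma}[f](y)$. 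Reverting from BC to KG through Theorem \ref{main} yields the claimed Leibniz identity.

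The only delicate point, which I expect to be the main obstacle, is bookkeeping around the existence of $\mbox{\rm D}^{\alpha}_{KG,\sigma}[f\cdot g]$. Theorem \ref{main} is stated as an equality assuming existence of the KG-LFD, so strictly speaking the computation above produces $\mbox{\rm D}^{\alpha}_{BC,\sigma}[f\cdot g]$ from the existence of $\mbox{\rm D}^{\alpha}_{BC,\sigma}[f]$ and $\mbox{\rm D}^{\alpha}_{BC,\sigma}[g]$; one then has to invoke the other direction of the BC/KG equivalence (established in \cite{bc2}) to conclude that $\mbox{\rm D}^{\alpha}_{KG,\sigma}[f\cdot g]$ is itself defined at $y$. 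Apart from this point, no estimates are needed: continuity of $f$ and the elementary split of $f(x)g(x)-f(y)g(y)$ do all the work, which is exactly why the Leibniz property holds for the difference-quotient formulation and, by the representation theorem, for the KG-LFD as well.
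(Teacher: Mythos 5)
Your proposal is correct and is essentially the argument the paper intends: the paper offers no explicit proof, stating only that the equality of Theorem \ref{main} ``leads easily'' to the result, and the easy route is precisely your reduction to the difference-quotient form $\mbox{\rm D}^{\alpha}_{BC,\sigma}$ followed by the classical splitting $f(x)g(x)-f(y)g(y)=f(x)(g(x)-g(y))+g(y)(f(x)-f(y))$ and passage to the limit using continuity of $f$. The one delicate point you flag --- that this only yields $\mbox{\rm D}^{\alpha}_{BC,\sigma}[f\cdot g]$ and that the existence of $\mbox{\rm D}^{\alpha}_{KG,\sigma}[f\cdot g]$ requires the converse direction of the KG/BC correspondence, which is not what Theorem \ref{main} asserts --- is a real caveat, but it is equally present and equally unaddressed in the paper's own statement of the lemma.
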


As for the Jumarie's fractional derivative, the fact that the Kolwankar-Gangal operator or BC- operator satisfy the Leibniz property seems to indicate that these operators must be trivial on some sets.

\subsection{Triviality of the Kolwankar-Gangal fractional derivative ?}

As already pointed out, the fact that the Kolwankar-Gangal fractional derivative satisfies the Leibniz property and is a linear operator gives some indications that this operator is trivial at least on some functional sets where it is defined. This is indeed the case, as proved in \cite{bc2}:

\begin{theorem}
\label{holder1}
Let $f\in H^{\lambda} ([a,b])$, then for all $0<\alpha <\lambda \leq 1$, we have $\mbox{\rm D}_{KG,+}^{\alpha} [f] (x)=0$ in $[a,b]$.
\end{theorem}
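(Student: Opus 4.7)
The plan is to invoke Theorem~\ref{main} to convert the Kolwankar-Gangal derivative into the difference-quotient form $\mathrm{D}^{\alpha}_{BC,+}$ and then use the H\"older bound to show that the resulting limit vanishes. The Leibniz property itself is not used here; the rigidity comes directly from the scaling of the quotient.

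First I would check that $\mathrm{D}^{\alpha}_{KG,+}[f](y)$ is actually well defined for every $y \in [a,b]$. Since $f \in H^{\lambda}([a,b])$, we have $|f(t)-f(y)| \leq C (t-y)^{\lambda}$ on $[y, y+\delta]$, so the integrand of
$$I^{1-\alpha}_{y,+}[f-f(y)](x) = \frac{1}{\Gamma(1-\alpha)} \int_y^x (x-t)^{-\alpha} (f(t)-f(y))\, dt$$
is dominated by $C(x-t)^{-\alpha}(t-y)^{\lambda}$, which is integrable. By the classical mapping properties of the Riemann-Liouville integral on H\"older classes (cf.\ \cite{skm}, Thm.~3.1 and the discussion of $E^{\alpha}_{a,+}$ on p.~44), $I^{1-\alpha}_{y,+}[f-f(y)]$ belongs to $AC([y,y+\delta])$, so $f - f(y) \in E^{\alpha}_{y,+}([y, y+\delta])$ and the KG-LFD exists in the sense fixed earlier in the paper.

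Second, by Theorem~\ref{main},
$$\mathrm{D}^{\alpha}_{KG,+}[f](y) = \mathrm{D}^{\alpha}_{BC,+}[f](y) = \Gamma(1+\alpha) \lim_{x \to y^+} \frac{f(x)-f(y)}{(x-y)^{\alpha}}.$$
Applying the H\"older estimate,
$$\left| \frac{f(x)-f(y)}{(x-y)^{\alpha}} \right| \leq C (x-y)^{\lambda - \alpha},$$
and since $\lambda - \alpha > 0$, the right-hand side tends to $0$ as $x \to y^+$. Hence the limit equals $0$ and $\mathrm{D}^{\alpha}_{KG,+}[f](y) = 0$ for every $y \in [a,b]$.

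The only step that is not a one-line computation is the verification that the KG-LFD is defined in the first place, which rests on the known regularity of fractional integrals of H\"older functions; once this is in hand, Theorem~\ref{main} trivializes the rest. I do not expect any genuine obstacle: the whole content of the lemma is the mismatch between the H\"older exponent of $f$ and the order $\alpha$ of the derivative, which the $\Gamma(1+\alpha)$ scaling in the BC form exposes immediately.
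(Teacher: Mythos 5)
Your proof is correct in substance but takes a genuinely different route from the paper's. The paper never invokes Theorem~\ref{main}: it uses the classical structure theorem for the Riemann--Liouville derivative of a H\"older function (\cite{skm}, Lemma 13.1 and the corollary of Lemma 13.2), namely $D^{\alpha}_{x,+}[f](y) = \frac{f(x)}{\Gamma(1-\alpha)(y-x)^{\alpha}} + \psi_f(y)$ with $\psi_f \in H^{\lambda-\alpha}$ and $\psi_f(x)=0$; applied to $f-f(x)$ the singular term drops out, and the remainder is H\"older continuous and vanishes at $x$, so the localizing limit is $0$. You instead pass through the representation $D^{\alpha}_{KG,+}=D^{\alpha}_{BC,+}$ and kill the difference quotient with the bound $|f(x)-f(y)|\le C(x-y)^{\lambda}$. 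Your route buys a one-line final computation, but at the cost of leaning on Theorem~\ref{main}, a nontrivial representation theorem whose original proof in \cite{bc} was incorrect and was only repaired in \cite{cyz} and \cite{bc2}; the paper's argument is self-contained modulo the classical mapping properties of fractional integrals on H\"older classes. One step you should tighten: Theorem~\ref{main} is stated under the hypothesis that $D^{\alpha}_{KG,+}[f](y)$ \emph{exists}, and if that is read as ``the limit $\lim_{x\to y^+} D^{\alpha}_{y,+}[f-f(y)](x)$ exists'' rather than the weaker $E^{\alpha}_{y,+}$ membership you verify, then absolute continuity of $I^{1-\alpha}_{y,+}[f-f(y)]$ alone does not suffice --- you need the continuity of its derivative near $y$. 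This does follow from the same regularity you cite ($I^{1-\alpha}$ maps $H^{\lambda}_0$ into $H^{1+\lambda-\alpha}_0$, so the RL derivative lies in $H^{\lambda-\alpha}$ and in particular tends to its value $0$ at $y$), but once you have written that down you have essentially reproduced the paper's direct computation, and the detour through Theorem~\ref{main} becomes superfluous.
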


\begin{proof}
This follows from a direct computation. As $f\in H^{\lambda} ([a,b])$, $0<\lambda \leq 1$, we have for all $\alpha <\lambda$ and all $x\in [a,b[$ (see \cite{skm},p.239,Lemma 13.1 and page 242, Corollary of Lemma 13.2) that
\begin{equation}
D_{x,+}^{\alpha} [f] (y) =\di\frac{f(x)}{\Gamma (1-\alpha )\, (y-x)^{\alpha} } +\psi_f (y),
\end{equation}
where $\psi_f \in H^{\lambda -\alpha} ([x,x+\delta ])$ for a certain $\delta >0$ such that $\psi_f (x)=0$. As a consequence, $D_{x,+}^{\alpha} [f-f(x)] (x)=\psi_{f-f(x)} (x)=0$ which concludes the proof.
\end{proof}

\begin{remark}
The previous result is proved in (\cite{bd},Lemma 3.1 p.69) when $f$ is at least $C^1$.
\end{remark}

An alternative proof of Theorem \ref{holder1} is to see that the arguments of the obstruction Lemma holds when one is dealing with a functional set $F$ such that for all $f\in F$, we can find $g\in F$ such that $f=g^3$. This is indeed the case for Hölderian functions of order $1>\lambda>\alpha$.\\

The previous result does not implies that the Kolwankar-Gangal fractional derivatives is always trivial. One can indeed find some explicit class of functions for which this derivative can be effectively computed. See for example \cite{kl}.\\

Two results are important here and related to the validity of the Leibniz property for this operator and its domain of definition. \\

First, in order to apply the obstruction Lemma, the operator must satisfy the Leibniz property over a functional set where it is always defined. However, for Hölderian functions in the class $H^{\alpha} ([a,b])$ the RL fractional derivative of $f-f(x)$ is not always defined, even almost everywhere. Indeed, when $\lambda=\alpha$, we have $I^{1-\alpha}_{x,+} [f-f(x)] \in H^{1,1} ([x,x+\delta ])$ (see \cite{skm},Theorem 3.1 p.53-54) which corresponds to Log-Lipschitz continuous functions instead of $H^{1+(\lambda-\alpha)} ([a,b])$ when $\lambda >\alpha$.

\begin{remark}
An example of a function $f$ in $H^{\alpha} ([a,b])$ for which the RL fractional derivative of order $\alpha$ $D^{\alpha}_{x,+} [f-f(x)]$ is not defined is given by the Weierstrass function $W_{\alpha} (x)= \di\sum_{n=0}^{\infty} q^{-\alpha n} \cos ( q^n x)$ with $q>1$ (see \cite{ross} Theorem 2 p.150 and Remark 4 p.155).
\end{remark}

Second, assuming that the Kolwankar-Gangal fractional derivative is well defined on an interval and continuous, one can proved directly that it is trivial (see for example \cite{cr2},$\S$.3, Theorem 4.1 and Corollary 4.1 p.4923).

\begin{remark}
This negative result was in fact one of the reasons that has leaded one of the authors not to continue the exploration of the use of local fractional derivatives in Physics as first studied in \cite{bc3}.
\end{remark}

Even with a less stronger condition, existence almost everywhere on a given interval, we have the following result proved in \cite{cyz}:

\begin{theorem}
Let $f:[a,b]\rightarrow \R$ be a locally $\alpha$-Hölder continuous function in $[a,b]$ for $0<\alpha <1$. Assume that the Kolwankar-Gangal fractional derivative exist almost everywhere in $[a,b]$. Then, the Kolwankar-Gangal fractional derivative is zero almost everywhere in $[a,b]$.
\end{theorem}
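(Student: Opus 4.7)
The plan is to reduce to the Ben Adda–Cherbit difference-quotient representation and then to derive a contradiction from the strict subadditivity of $h\mapsto h^{\alpha}$ when $\alpha<1$. By Theorem \ref{main}, on the full-measure set where the Kolwankar-Gangal derivative exists (take $\sigma=+$ for concreteness) one has
\[
D^{\alpha}_{KG,+}[f](y)=\Gamma(1+\alpha)\lim_{x\to y^{+}}\frac{f(x)-f(y)}{(x-y)^{\alpha}}.
\]
Write $g(y):=D^{\alpha}_{KG,+}[f](y)/\Gamma(1+\alpha)$. The local $\alpha$-Hölder hypothesis makes $g$ locally bounded, so I may assume after a countable exhaustion argument that $\{g\neq 0\}$ has positive measure and, by further restricting to a level set and changing sign if necessary, fix a set $E_{0}$ of positive measure and a constant $c>0$ and small $\varepsilon>0$ with $g(y)\in[c,c(1+\varepsilon)]$ for all $y\in E_{0}$.

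Next I would apply Egorov's theorem to pass to $E\subset E_{0}$ of positive measure on which the defining limit converges uniformly: for a prescribed $\eta>0$ there exists $\delta>0$ such that for every $y\in E$ and every $x\in(y,y+\delta)$,
\[
\bigl|\,f(x)-f(y)-g(y)(x-y)^{\alpha}\bigr|\leq \eta(x-y)^{\alpha}.
\]
Pick a Lebesgue density point $y_{0}$ of $E$; since both $E-y_{0}$ and $\tfrac12(E-y_{0})$ have density $1$ at $0$, their intersection does too, so I can find arbitrarily small $h\in(0,\delta/2)$ for which simultaneously $y_{0}+h\in E$ and $y_{0}+2h\in E$.

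Applying the uniform estimate at $y_{0}$ with step $h$, at $y_{0}$ with step $2h$, and at $y_{0}+h$ with step $h$, and combining the three (the identity is $(f(y_{0}+2h)-f(y_{0}))=(f(y_{0}+h)-f(y_{0}))+(f(y_{0}+2h)-f(y_{0}+h))$), I obtain after dividing by $h^{\alpha}$
\[
g(y_{0}+h)=(2^{\alpha}-1)\,g(y_{0})+O(\eta),
\]
where the implicit constant depends only on $\alpha$. Since $y_{0},\,y_{0}+h\in E$ and $2^{\alpha}-1<1$ for $\alpha<1$, the lower bound $g(y_{0}+h)\geq c$ and the upper bound $g(y_{0})\leq c(1+\varepsilon)$ force
\[
c\leq (2^{\alpha}-1)c(1+\varepsilon)+O(\eta),
\]
which is false once $\varepsilon$ and $\eta$ are chosen small enough (depending only on $\alpha$). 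This contradiction shows $g=0$ almost everywhere, completing the argument.

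The main obstacle, in my view, is not the contradictory estimate at the end but the preliminary bookkeeping: ensuring that the exceptional null sets from Theorem \ref{main}, from Egorov's theorem, and from the Lebesgue density argument can be arranged to intersect in a set on which three collinear points $y_{0}$, $y_{0}+h$, $y_{0}+2h$ can be chosen with $h$ arbitrarily small. Once that is done the arithmetic collapses cleanly because the Hölder constant never enters, only the strict inequality $2^{\alpha}<2$.
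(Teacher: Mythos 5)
Your argument is essentially correct, and it is worth noting at the outset that the paper itself gives no proof of this statement: it is quoted from \cite{cyz} without reproduction of the argument, so there is nothing internal to compare against. Your proof is a legitimate self-contained route: reduce to the difference-quotient representation via Theorem \ref{main}, pinch $g$ into a narrow band $[c,c(1+\varepsilon)]$ on a set of positive measure, upgrade to uniform convergence by Egorov, and exploit the three-point identity $f(y_0+2h)-f(y_0)=(f(y_0+h)-f(y_0))+(f(y_0+2h)-f(y_0+h))$ at a density point to force $c\leq(2^{\alpha}-1)c(1+\varepsilon)+O(\eta)$, which is absurd since $2^{\alpha}-1<1$. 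The arithmetic is right, and the density-point step (that $(E-y_0)\cap\tfrac12(E-y_0)$ has right density $1$ at $0$) is sound. Interestingly, your argument never really uses the $\alpha$-H\"older hypothesis except for the inessential remark on local boundedness, so you are in fact proving the stronger statement that for any continuous $f$ the set where the Ben Adda--Cresson limit exists, is finite and nonzero must be null; this is consistent with the spirit of the alternative argument sketched in the paper after Theorem \ref{holder1} (the cube-root/obstruction mechanism), but it is a genuinely different and more quantitative mechanism, resting on the strict subadditivity $2^{\alpha}<2$ rather than on the Leibniz rule.

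Two points of bookkeeping deserve to be made explicit rather than waved at. First, Egorov applies to a sequence of measurable functions, whereas your limit is over a continuum of increments; you should pass through $\omega_{1/n}(y):=\sup_{0<h<1/n}\bigl|\,(f(y+h)-f(y))h^{-\alpha}-g(y)\bigr|$, which decreases pointwise to $0$ on the existence set and is measurable because $f$ is continuous (the supremum may be taken over rational $h$). Second, the order of quantifiers matters: $\varepsilon$ must be fixed first, depending only on $\alpha$ so that $(2^{\alpha}-1)(1+\varepsilon)<1$, then the level-set decomposition $\{g>0\}=\bigcup_{k\in\mathbb{Z}}\{(1+\varepsilon)^{k}\leq g<(1+\varepsilon)^{k+1}\}$ produces $c$, and only then is $\eta$ chosen smaller than $c\bigl(1-(2^{\alpha}-1)(1+\varepsilon)\bigr)/4$. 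With these two repairs the proof is complete.
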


As a consequence, the Kolwankar-Gangal fractional derivative is not trivial but almost everywhere trivial.

\section{Rigidity property and the chain rule property}

The previous analysis proves that the Leibniz property is indeed very strong. What about the chain rule property ? The Jumarie fractional derivative was assumed to satisfy some kind of chain rule formula (see \cite{j2},Corollary 4.1 formula (4.4) and (4.5)). However, as this fractional derivative corresponds exactly to the Caputo derivatives this can not be true. Tarasov has in particular discussed this problem restricting his attention to the action of an operator satisfying a specific chain rule formula on monomial functions (see \cite{tarasov2}). In fact, a complete answer is given in \cite{akm}. But first, we give the following result states in \cite{km}:

\begin{theorem}
Suppose $T:C^1 (\R )\rightarrow C(\R )$ satisfies the Leibniz rule and the chain rule functional equations
$$
\left .
\begin{array}{lll}
T(f\cdot g ) & = & Tf\cdot g +f\cdot Tg ,\\
T(f\circ g) & = & (Tf)\circ g \cdot Tg ,
\end{array}
\right .
$$
for all $f,g \in C^1 (\R )$. Then $T$ is either identically $0$ or the derivative, $Tf=f'$.
\end{theorem}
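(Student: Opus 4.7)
The plan is to combine the K\"onig--Milman structure theorem (Theorem~\ref{akmthm}) with the chain rule functional equation. Since $T$ satisfies the Leibniz property, Theorem~\ref{akmthm} furnishes continuous functions $c, d \in C(\R)$ such that
\[
T(f)(x) = c(x) f'(x) + d(x) f(x) \ln|f(x)|
\]
for every $f \in C^1(\R)$ (with the convention $y \ln|y| = 0$ at $y = 0$, which makes the second summand continuous). It remains to use the chain rule to show that either $T \equiv 0$, or $c \equiv 1$ and $d \equiv 0$.

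First, I apply the chain rule with $f = g = \mathrm{id}_\R$: since $\mathrm{id} \circ \mathrm{id} = \mathrm{id}$, one obtains $T(\mathrm{id})(x) = T(\mathrm{id})(x)^2$ at every $x$, whence $T(\mathrm{id})(x) \in \{0,1\}$ pointwise. By continuity of $T(\mathrm{id})$ and connectedness of $\R$, the function $T(\mathrm{id})$ is identically $0$ or identically $1$. In the first case, the chain rule applied to $h = \mathrm{id} \circ h$ yields $T(h) = T(\mathrm{id})(h) \cdot T(h) = 0$ for every $h \in C^1(\R)$, so $T \equiv 0$. Otherwise $T(\mathrm{id}) \equiv 1$; evaluating the representation on the identity then gives the relation $c(x) + x \ln|x|\, d(x) = 1$ for all $x \in \R$.

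The crux is to force $d \equiv 0$. I apply the chain rule to two \emph{constant} functions: fix $a, b \in \R \setminus \{-1, 0, 1\}$ and set $f \equiv a$, $g \equiv b$, so $f \circ g \equiv a$. The representation yields $T(a)(x) = a\ln|a|\,d(x)$, $T(b)(x) = b\ln|b|\,d(x)$ and $T(a)(b) = a\ln|a|\,d(b)$, so the chain rule $T(a)(x) = T(a)(b) \cdot T(b)(x)$ becomes
\[
a\ln|a|\,d(x) \;=\; \bigl(a\ln|a|\,d(b)\bigr)\bigl(b\ln|b|\,d(x)\bigr).
\]
Dividing by $a\ln|a| \neq 0$ gives $d(x)\bigl(1 - b\ln|b|\,d(b)\bigr) = 0$ for every $x \in \R$ and every $b \in \R \setminus \{-1, 0, 1\}$. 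If $d$ were not identically zero, some $d(x_0) \neq 0$ would force $d(b) = 1/(b\ln|b|)$ for every such $b$; but this expression is unbounded as $b \to 1$, contradicting $d \in C(\R)$. Hence $d \equiv 0$, and the relation from the previous paragraph then forces $c \equiv 1$, so $T(f) = f'$.

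The main obstacle is not the calculations but recognising which compositions yield genuine information. The entropy form $cf' + df\ln|f|$ is engineered to respect the Leibniz rule, and many natural chain-rule substitutions (for instance, either factor equal to $\mathrm{id}$, or a monomial composed with $\mathrm{id}$) merely reproduce the relation from the identity step rather than generate new constraints. The key observation is that taking \emph{both} composed functions to be non-trivial constants eliminates the derivative term entirely and leaves a purely multiplicative equation in $d$ that is incompatible with the continuity of $d$ near $0$ and $\pm 1$.
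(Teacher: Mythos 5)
Your proposal is correct and complete. Note that the paper itself offers no proof of this statement: it is quoted verbatim from \cite{km}, so there is no in-paper argument to compare against. Your derivation is the natural one and it works: Theorem~\ref{akmthm} (the Leibniz-only structure theorem, also stated without proof in the paper) gives $T(f)(x)=c(x)f'(x)+d(x)f(x)\ln|f(x)|$; the substitution $f=g=\mathrm{id}$ correctly forces $T(\mathrm{id})\in\{0,1\}$ pointwise and hence, by continuity and connectedness, identically $0$ (whence $T\equiv 0$ via $h=\mathrm{id}\circ h$) or identically $1$ (whence $c(x)+x\ln|x|\,d(x)=1$); and the composition of two constants $a,b\notin\{-1,0,1\}$ yields $d(x)\bigl(1-b\ln|b|\,d(b)\bigr)=0$, which kills $d$ because $b\mapsto 1/(b\ln|b|)$ blows up near $b=1$ while $d$ is continuous there. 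All the individual computations check out (constants are in $C^1(\R)$, $a\ln|a|\neq 0$ for the chosen $a$, and the convention $y\ln|y|=0$ at $y=0$ keeps the representation consistent at zeros of $f$). What your approach buys is a clean reduction: the joint Leibniz-plus-chain-rule characterization follows from the Leibniz-only structure theorem by two elementary substitutions, which is essentially how K\"onig and Milman themselves proceed. The only caveat worth stating explicitly is that your proof is conditional on Theorem~\ref{akmthm}, whose proof is the genuinely hard analytic content and is likewise only cited, not proved, in this paper.
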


The chain rule property is even a stronger algebraic constraint than the Leibniz rule. Indeed, we have (see \cite{akm},Proposition 3):

\begin{theorem}
Assume that $T:C(\R )\rightarrow C(\R )$ satisfies the functional equation
$T(f\circ g)  =  (Tf)\circ g \cdot Tg$, for all $f$,$g \in C(\R )$ and that there exists $g_0 \in C(\R )$ and $x_0\in \R$ with $(Tg_0)(x_0 )=0$. Then $T$ is zero on the class of half-bounded continuous functions.
\end{theorem}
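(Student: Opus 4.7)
The proof would unfold in four stages. First, applying the chain rule to $\mathrm{id}\circ\mathrm{id}=\mathrm{id}$ yields $T\mathrm{id}=(T\mathrm{id})^2$, and continuity of $T\mathrm{id}$ forces $T\mathrm{id}\equiv 0$ or $T\mathrm{id}\equiv 1$ (by the intermediate value theorem on $\R$). In the first case the chain rule gives $Tg=T(\mathrm{id}\circ g)=(T\mathrm{id})\circ g\cdot Tg=0$ for every $g\in C(\R)$, so $T\equiv 0$ and the conclusion is immediate.

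Assume then $T\mathrm{id}\equiv 1$. Two structural facts follow directly from the chain rule. (i) For any homeomorphism $\phi$ of $\R$ the relation $\phi^{-1}\circ\phi=\mathrm{id}$ gives $1=(T\phi^{-1})\circ\phi\cdot T\phi$, so $T\phi$ is nowhere zero; in particular every translation and every nonzero dilation has nonvanishing $T$-image. (ii) The given $g_0$ cannot be injective, since an injective continuous map $\R\to\R$ is strictly monotone, hence a homeomorphism onto its range $R_0=g_0(\R)$, and a Tietze extension $\rho:\R\to\R$ of $g_0^{-1}$ would satisfy $\rho\circ g_0=\mathrm{id}$, forcing $Tg_0$ nowhere zero and contradicting $(Tg_0)(x_0)=0$. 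As a by-product, with $c_a$ denoting the constant function $a$, the identity $g_0\circ c_{x_0}=c_{g_0(x_0)}$ combined with the chain rule yields $Tc_{g_0(x_0)}\equiv 0$, and post-composition with arbitrary $\psi\in C(\R)$ spreads this to $Tc_a\equiv 0$ for every $a\in\R$.

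The third stage factorises a half-bounded $f$. For $f\geq m$, set $k=\sqrt{f-m}\in C(\R)$ and $\psi_m(t)=m+t^2$; then $f=\psi_m\circ k$, so the chain rule gives $Tf=(T\psi_m)\circ k\cdot Tk$. Writing $\psi_m=\tau_m\circ\mathrm{sq}$ with $\tau_m(t)=t+m$ the translation and $\mathrm{sq}(t)=t^2$, and using that $T\tau_m$ is nowhere zero by (i), one sees that $T\psi_m(t)$ vanishes precisely when $T\mathrm{sq}(t)$ does. The problem therefore reduces to showing $T\mathrm{sq}\equiv 0$; the bounded-above case is handled by the symmetric factorisation $f=\tau_M\circ h_{-1}\circ\mathrm{sq}\circ\sqrt{M-f}$.

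The main difficulty is to prove $T\mathrm{sq}\equiv 0$ from the lone zero $(Tg_0)(x_0)=0$. The key observation is the scaling identity $\mathrm{sq}\circ h_c=h_{c^2}\circ\mathrm{sq}$ for the dilation $h_c(t)=ct$; via the chain rule, combined with the nonvanishing of $Th_c$ and $Th_{c^2}$ from (i), it gives a multiplicative relation between $T\mathrm{sq}(ct)$ and $T\mathrm{sq}(t)$ with nowhere-vanishing coefficient, so the zero set of $T\mathrm{sq}$ is invariant under every dilation $t\mapsto ct$, $c\neq 0$. Any single zero of $T\mathrm{sq}$ at some $t_0\neq 0$ therefore spreads to all of $\R\setminus\{0\}$ and, by continuity, yields $T\mathrm{sq}\equiv 0$. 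It remains to extract such a nonzero zero from the data: by the non-injectivity of $g_0$ and the intermediate value theorem there exist $a_0<b_0$ with $g_0(a_0)=g_0(b_0)$ and an interior extremum $c_0$, near which $g_0-g_0(c_0)$ has a definite sign, and a (local) factorisation of the form $g_0=\psi_{g_0(c_0)}\circ k$ would convert $(Tg_0)(x_0)=0$ into $(T\mathrm{sq})(k(x_0))=0$. Rendering this localisation rigorous in a way that keeps $k(x_0)\neq 0$, and in the degenerate subcase $k(x_0)=0$ iterating the argument on $k$ itself (again a half-bounded function carrying a zero of $T$), is the genuine obstacle of the proof.
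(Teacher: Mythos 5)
First, a point of reference: the paper does not prove this statement at all — it is quoted verbatim from \cite{akm} (Proposition 3), so there is no in-paper proof to measure your argument against. Judged on its own terms, your proposal gets several genuine pieces right: the dichotomy $T(\mathrm{id})\equiv 0$ or $\equiv 1$, the nonvanishing of $T\phi$ for homeomorphisms $\phi$ of $\R$, the derivation of $Tc_a\equiv 0$ for all constants from the single zero $(Tg_0)(x_0)=0$, the factorisation $f=\psi_m\circ\sqrt{f-m}$ of a half-bounded function, and the dilation-invariance of the zero set of $T(\mathrm{sq})$. These are all correct and in the spirit of the Artstein-Avidan--K\"onig--Milman analysis.

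However, there are two genuine gaps. The minor one: your claim (ii) that $g_0$ cannot be injective is false, and the Tietze argument for it breaks down. If $g_0$ is injective with bounded range (think of $\arctan$), then $g_0^{-1}$ blows up at the finite endpoints of the open interval $g_0(\R)$ and admits no continuous extension $\rho:\R\to\R$; indeed nothing in the hypotheses prevents the witness $g_0$ from being such a function (the theorem itself would then assert $Tg_0\equiv 0$, which is perfectly consistent with $g_0$ being injective). The major one is the step you yourself flag: everything has been reduced to exhibiting a zero of $T(\mathrm{sq})$ at some $t_0\neq 0$, and this is never established. The proposed local factorisation $g_0=\psi_{g_0(c_0)}\circ k$ near an interior extremum is not a valid input to the chain rule, which is a functional equation on $C(\R)$ and requires a \emph{global} decomposition ($k=\sqrt{g_0-g_0(c_0)}$ only exists as a continuous function on $\R$ if $g_0\geq g_0(c_0)$ everywhere). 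Even granting a global factorisation, the chain rule only yields that \emph{one} of $(T\psi)(k(x_0))$ and $(Tk)(x_0)$ vanishes, and in the degenerate branch where the zero keeps landing at $k_n(x_0)=0$ (e.g. when $x_0$ is a global minimiser of $g_0$) the iteration never terminates and a zero of $T(\mathrm{sq})$ at the origin does not propagate under dilations, which all fix $0$. Since the entire proof funnels through $T(\mathrm{sq})\equiv 0$, this missing keystone means the argument as written does not establish the theorem.
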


This Theorem implies the following result :

\begin{theorem}[Obstruction-chain rule]
There exists no non trivial operator $D : C(\R ) \rightarrow C(\R )$ which satisfies the chain rule property and such that $D$ is zero on constant functions.
\end{theorem}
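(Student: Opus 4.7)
My plan is to reduce the statement to the preceding theorem of this section and then to lift the resulting vanishing on half-bounded continuous functions to all of $C(\R)$ by a chain-rule argument.

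First, I would observe that the hypothesis that $D$ vanishes on constant functions supplies, for free, the trigger condition appearing in the preceding theorem: for any constant function $C_c \in C(\R)$ one has $D C_c \equiv 0$, so the condition $(D g_0)(x_0) = 0$ holds with $g_0 = C_c$ and any $x_0 \in \R$. Applying the preceding theorem with $T = D$ then yields $Df \equiv 0$ for every half-bounded $f \in C(\R)$.

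Next, to climb from the half-bounded class to arbitrary elements of $C(\R)$, I would exploit the chain rule applied to the tautology $\mathrm{id} = \mathrm{id} \circ \mathrm{id}$. This gives $D(\mathrm{id})(x) = (D(\mathrm{id})(x))^2$ for every $x \in \R$, so $D(\mathrm{id})(x) \in \{0,1\}$ pointwise, and since $D(\mathrm{id})$ is continuous it must be either identically $0$ or identically $1$ on $\R$. In the first alternative, for any $f \in C(\R)$ the chain rule applied to $f = f \circ \mathrm{id}$ gives $Df = (Df) \cdot D(\mathrm{id}) \equiv 0$, so $D \equiv 0$ and the theorem is proved.

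The main obstacle is ruling out the remaining alternative $D(\mathrm{id}) \equiv 1$. My idea is to show that any $f \in C(\R)$ admits, at least at each prescribed point $x_0$, a factorization $f = h \circ g$ with $g$ half-bounded and $h \in C(\R)$ (built from half-bounded pieces such as $\exp$, $\arctan$, or a cube/cube-root pair), so that the chain rule together with the already-proved vanishing $Dg \equiv 0$ would force $Df(x_0) = 0$, contradicting $D(\mathrm{id}) = 1$. The delicate step is the continuous extension of the ``outer'' factor $h$ to the whole line when $f$ is unbounded in both directions, and combining such factorizations with compositional identities like $x^2 = x^{2/3} \circ x^3$ to propagate vanishing onto non-half-bounded building blocks; this is where I expect the real work of the argument to concentrate.
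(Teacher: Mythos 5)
Your first paragraph is, in fact, the paper's entire proof: the authors simply observe that vanishing on constants supplies the pair $(g_0,x_0)$ required by the preceding theorem of Artstein-Avidan, K\"onig and Milman, conclude that $D$ vanishes on every half-bounded function in $C(\R)$, and stop there, implicitly reading ``trivial'' as ``zero on that (huge) class''. Your further steps --- $D(\mathrm{id})=(D(\mathrm{id}))^2$ pointwise, hence $D(\mathrm{id})\equiv 0$ or $\equiv 1$ by continuity and connectedness of $\R$, and the disposal of the first branch via $Df=Df\cdot D(\mathrm{id})$ --- are correct and go beyond what the paper does.

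The branch $D(\mathrm{id})\equiv 1$ is a genuine gap, and the strategy you sketch for it cannot be completed. First, the factorization you need fails precisely for the function you must reach: if $\mathrm{id}=h\circ g$ with $g\in C(\R)$ half-bounded and $h\in C(\R)$, then $g$ is injective, hence strictly monotone with range a half-bounded open interval $J$, and on $J$ one must have $h=g^{-1}$, which tends to $\pm\infty$ at a finite endpoint of $J$; this contradicts the continuity of $h$ at that real point. Second, the branch cannot be excluded at all: define $Df$ to be the constant function $1$ if $f$ is a homeomorphism of $\R$ onto $\R$, and the constant function $0$ otherwise. One checks that $f\circ g$ is a homeomorphism of $\R$ if and only if both $f$ and $g$ are (if $f\circ g$ is bijective then $g$ is injective, hence strictly monotone with open range $J$; were $J\neq\R$, the restriction of $f$ to $J$ would be a monotone bijection of $J$ onto $\R$ and would blow up at a finite endpoint of $J$, contradicting the continuity of $f$ there; so $g$, and then $f=(f\circ g)\circ g^{-1}$, are homeomorphisms). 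Hence this $D$ satisfies the chain rule, vanishes on constants and on all half-bounded functions, yet $D(\mathrm{id})\equiv 1$. So ``identically zero on $C(\R)$'' is not a consequence of the stated hypotheses; the conclusion actually available --- and the one the paper relies on --- is the vanishing on all half-bounded continuous functions that your first step already establishes.
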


The reason for formulating such a result is that most of the modifications of the fractional Riemann-Liouville derivatives are done in order to satisfy the fact that the derivative of a constant function is zero. We see that imposing this condition and the chain rule property lead to a trivial operator.\\

We return now to the "proof" proposed by G. Jumarie in \cite{j3} of the Leibniz rule for fractional derivatives as long as one considers non-differentiable functions. We have already seen that this is not possible but the previous Theorem invalidate also the approach given in \cite{j3}. Indeed, G. Jumarie assumes in \cite{j3} that his fractional derivative is zero on constant functions and satisfies the chain rule property (see \cite{j3},$\S$.1,p.50). As a consequence, even if one can make some computation assuming the existence of such an operator and to deduce some properties satisfied by this operator, all the theory is empty due to the fact that no such operators exist.

\section{Conclusion and perspectives}

Using some known results on operators satisfying the Leibniz or chain rule property, we have explained that a fractional derivative which as a basic constraint corresponds to the classical derivative over $C^1 (\R )$ function can not satisfy the chain rule property. The same is true for the Leibniz property as long as linearity is preserved.\\

The previous discussion can be extended in order to determine the set of algebraic conditions which are necessary in order to recover the classical Riemann-Liouville fractional derivatives. This will be discussed in an another work \cite{ja2}.


\begin{thebibliography}{10}
\bibitem{almeida} R. Almeida and D.F.M. Torres, Fractional variational
calculus for nondifferentiable functions, Computers and Mathematics with Applications 61 (2011), 3097–3104.
\bibitem{akm} S. Artstein-Avidan, H. König, V. Milman, The Chain rule as a functional equation, Journ. Funct. Anal. 259 (2010), 2999-3024.
\bibitem{bc} F. Ben Adda, J. Cresson, About non-differentiable functions, J. Math. Anal. Appl. 263 (2001), 721-737.
\bibitem{bc2} F. Ben Adda, J. Cresson, Erratum to "About non-differentiable functions, J. Math. Anal. Appl. 263, 721-737 (2001)", J. Math. Anal. Appl. (2013).
\bibitem{bc3} F. Ben Adda, J. Cresson, Fractional differential equations and the schrödinger equation, Appl. Math. Comput., Vol. 161 (1) (2005), 323-345.
\bibitem{bd} A. Babakahani, V. Daftardar-Gejji, On calculus of local fractional derivatives, J. Math. Anal. Appl. 270 (2002), 66-79.
\bibitem{cyz} Y. Chen, Y. Yan, K. Zhang, On the local fractional derivative, J. Math. Anal. Appl. 362 (2010), 17-33.
\bibitem{cher} G. Cherbit, Dimension locale, quantité de mouvement et trajectoire, in "Fractals, dimension non entičre et applications", p.340-352, Masson ed., Paris (1967).
\bibitem{cr2} J. Cresson, Scale calculus and the Schrödinger equation, J. Math. Phys. 44, 4907 (2003), 4907-4938.
\bibitem{ja2} J. Cresson, A. Szafra\'{n}ska, About the Riemann-Liouville fractional derivative, in preparation, 2016.
\bibitem{dp} N.C. Dias, J.N. Prata, Local fractional derivatives, in {\it Fractional calculus in Analysis, Dynamics and Optimal control}, J. Cresson editor, Nova Sciences Publishers, 2014.
\bibitem{gon} S. Gonnord, N. Tosel, {\it Thčmes d'Analyse pour l'Agrégation - Calcul différentiel}, Ed. Ellipses, 1998.
\bibitem{jacob} N. Jacobson, {\it Lie Algebras}, Dover ed., 1962.
\bibitem{j1} G. Jumarie, Modified Riemann-Liouville derivative and fractional Taylor series of nondifferentiable functions. Further results, Comput. Math. Appl., Vol.51 (2006), 1367-1376.
\bibitem{j2} G. Jumarie, Table of some basic fractional calculus formulae derived from a modified Riemann–Liouville derivative for non-differentiable functions, Appl. Math. Lett., Vol. 22 (2009), 378–385.
\bibitem{j3} G. Jumarie, The Leibniz rule for Fractional derivatives holds with non-differentiable functions, Mathematics and Statistics, Vol. 1(2) 2013, 50-52.
\bibitem{kst} A.A. Kilbas, H.M. Srivastava, and J.J. Trujillo, {\em Theory and applications of fractional differential equations}, Volume 204 of North-Holland Mathematics Studies. Elsevier Science B.V., Amsterdam, 2006.
\bibitem{kg1} K.M. Kolwankar, A.D. Gangal, Fractional differentiability of nowhere differentiable functions and dimensions, Chaos 6 (1996), 505-513.
\bibitem{kg} K.M. Kolwankar, A.D. Gangal, Local fractional derivatives and fractal functions of several variables, in Proceedings of Fractals in Engineering (1997).
\bibitem{kl} K.M. Kolwankar, J. Lévy Véhel, Measuring functions smoothnes with local fractional derivatives, Frac. Calc. Appl. Anal., Vol. 4(3) (2001), 285-301.
\bibitem{km} H. K\"{o}nig, V. Milman, Characterizing the derivative and the entropy function by the Leibniz rule, J. Funct. Anal., Vol. 261 (2011), 1325–1344.
\bibitem{liu} Liu, Cheng-shi, Counterexamples on Jumarie's two basic fractional calculus formulae, Commun. Nonlinear Sci. Numer. Simul., Vol. 22 (2015), no. 1-3, 92–94.
\bibitem{malino} A.B. Malinowska, Fractional variational calculus for non-differentiable functions. Chapter in Fractional Dynamics and Control, Springer Science+Business Media 2012, Eds. Baleanu, Dumitru; Machado, Jose Antonio Tenreiro; Luo, Albert C. J., Part 2, 97-108. 
\bibitem{ross} B. Ross, S.G. Samko, E.R. Love, Functions that have no first order derivative might have fractional derivatives of all orders less than one, Real Anal. Exchange, Vol. 20(2) (1994/5), pp.140-157.
\bibitem{skm} S. Samko, A. Kilbas, O. Marichev, {\it Fractional integrals and derivatives. Theory and applications}, Gordon and Breach, 1993.
\bibitem{tarasov1} V.E. Tarasov, No Violation of the Leibniz Rule. No Fractional Derivative, Commun. Nonlinear Sci. Numer. Simul., Vol.18, no.11. (2013) 2945-2948.
\bibitem{tarasov2} V.E. Tarasov, On chain rule for fractional derivatives, Commun. Nonlinear Sci. Numer. Simul., Vol. 30 (2016), 1-4.
\bibitem{wang} X. Wang, On the Leibniz rule and fractional derivatives for differentiable and non differentiable functions, (2014).
\end{thebibliography}
\end{document}